\documentclass[11]{article}
\usepackage{graphicx} 
\usepackage[utf8]{inputenc}
\usepackage{amsmath,amsfonts,amsthm, amssymb, mathtools}
\usepackage{bm}
\usepackage{geometry}
\usepackage{palatino} 
\usepackage{bbm}
\usepackage{todonotes}
\usepackage{mathtools}
\usepackage[hidelinks]{hyperref}
\usepackage[normalem]{ulem}
\usepackage{titlesec}
\usepackage{scrextend}
\usepackage{bbm}

\deffootnote[1em]{1em}{1em}{\textsuperscript{\thefootnotemark}\,}
\allowdisplaybreaks

\newcommand\redout{\bgroup\markoverwith
{\textcolor{red}{\rule[.5ex]{2pt}{3pt}}}\ULon}
\usepackage{xcolor}

\setcounter{secnumdepth}{4}

\renewcommand{\theparagraph}{\thesubsubsection.\arabic{paragraph}}

\titleformat{\paragraph}[runin]
{\normalfont\normalsize\bfseries}{\theparagraph}{1em}{}
\titlespacing*{\paragraph}{0pt}{1em}{1em}


\DeclareMathOperator{\PROB}{\mathbb{P}}
\DeclareMathOperator{\PR}{\mathbb{P}}
\DeclareMathOperator{\EXP}{\mathbb{E}}
\DeclareMathOperator{\Ex}{\mathbb{E}}
\DeclareMathOperator{\diag}{diag}
\DeclareMathOperator{\trace}{Trace}
\DeclareMathOperator{\res}{Res}
\DeclareMathOperator{\sr}{sr}

\def\QED{$\blacksquare$}
\def\endProof{\hfill \QED \vspace{1ex}}

\let\eps=\varepsilon
\let\theta=\vartheta
\let\rho=\varrho
\let\phi=\varphi

\newcommand{\blackdiamond}{%
  \mathord{%
    \sbox0{$\diamond$}%
    \resizebox{!}{2.5\ht0}{%
      \raisebox{\depth}{\rotatebox[origin=c]{45}{$\blacksquare$}}%
    }%
  }%
}

\newcommand{\blw}{w}
\newcommand{\fbl}{f^{(\blw)}}
\newcommand{\sm}{\setminus}
\newcommand{\trans}{\mathsf{T}}
\newcommand{\Rel}{\mathrm{Rel}}

\newcommand{\R}{\mathbb{R}}
\newcommand{\cL}{\mathcal{L}}

\newcommand{\M}{\mathcal{M}}

\newcommand{\bX}{\bm{X}}
\newcommand{\bZ}{\bm{Z}}
\newcommand{\bXi}{\bm{\Xi}}
\newcommand{\bmu}{\bm{\mu}}
\newcommand{\bell}{\boldsymbol{\ell}}
\newcommand{\bPsi}{\bm{\Psi}}
\newcommand{\bZeta}{\bm{Z}}

\newcommand{\ba}{\boldsymbol{a}}
\newcommand{\bu}{\boldsymbol{u}}
\newcommand{\bi}{\boldsymbol{i}}
\newcommand{\bmm}{\boldsymbol{m}}
\newcommand{\bb}{\boldsymbol{b}}
\newcommand{\bt}{\boldsymbol{t}}
\newcommand{\bj}{\boldsymbol{j}}
\newcommand{\bk}{\boldsymbol{k}}

\newcommand{\bxi}{\bm \xi}
\newcommand{\bx}{\bm x}

\newcommand{\bz}{\bm z}
\newcommand{\bv}{\bm v}
\newcommand{\bzeta}{\bm \zeta}
\renewcommand{\bmu}{{\bm \mu}}
\newcommand{\bpsi}{\bm \psi}
\newcommand\norm[1]{\lVert#1\rVert}
\newcommand{\levy}{\mathcal L}

\newcommand{\chr}[1]{\mathbbm{1}_{\{#1\}}}

\newcommand{\frob}{\mathsf{F}}

\newcommand{\event}{\mathcal{E}}
\newcommand{\qf}{Q}

\newcommand{\PROBp}[1]{\PROB\left\{#1\right\}}
\newcommand{\EXPp}[1]{\EXP\left\{#1\right\}}
\newcommand{\be}{\bm e}

\newcommand{\bdelta}{\bm \delta}
\newcommand{\bvs}[2]{{#1}_{#2}}

\newcommand{\maxoverball}[4]{\max_{\left(\bvs{#1}{#3}\in N_{\ell_{#3}}(#2_{#3})\right)_{#3 \in [#4]}}}

\newcommand{\YM}{\Phi} 
\newcommand{\ham}{\textup{H}}
\newcommand{\indset}[1]{[n]^{#1}}
\newcommand{\quot}[1]{/
_{#1}}
\newcommand{\ress}{r}
\newcommand{\indsetp}[1]{\mathcal{T}_{#1}}
\newcommand{\Gameikf}{\bm{\Game}_{i,k}f}
\newcommand{\GameNorm}{\lVert \Gameikf \rVert}

\newtheorem{theorem}{Theorem}[section]

\newtheorem{conjecture}[theorem]{Conjecture}

\newtheorem{lemma}[theorem]{Lemma}

\newtheorem{claim}[theorem]{Claim}
\newtheorem{example}[theorem]{Example}
\newtheorem{corollary}[theorem]{Corollary}
\newtheorem{remark}[theorem]{Remark}

\numberwithin{equation}{section}

\newcommand{\bij}{\varphi}
\newcommand{\Ext}{\mathrm{Ext}}

\title{Resilience of Rademacher chaos of low degree}
\author{Elad Aigner-Horev \and Daniel Rosenberg \and Roi Weiss}
\date{}

\begin{document}
\maketitle

\begin{abstract}
The {\em resilience} of a Rademacher chaos is the maximum number of adversarial sign-flips that the chaos can sustain without having its largest atom probability significantly altered. Inspired by probabilistic lower-bound guarantees for the resilience of linear Rademacher chaos (aka. resilience of the Littlewood-Offord problem), obtained by Bandeira, Ferber, and Kwan (Advances in Mathematics, Vol. $319$, $2017$), we provide probabilistic lower-bound guarantees for the resilience of Rademacher chaos of arbitrary degree; these being most meaningful provided that the degree is constant.  
\end{abstract}

\section{Introduction}
Given that a property $\mathcal{P}$ is upheld by a structure of $S$, say, a (hyper)graph or a matrix, a quintessential question in Combinatorics is how {\sl strongly} does $S$ possess $\mathcal{P}$? 
Throughout the years, the combinatorial community has cultivated a rich culture and numerous methods by which the above question is made precise and thus delivering accurate methodology for estimating the so called {\sl robustness} of properties. For a precise list of lines of research thus pursued, consult the excellent survey~\cite{Su16} put forth by Sudakov.



One highly influential notion used to gauge the robustness of properties is that of {\sl resilience}. Roughly put, in the combinatorial setting, this term is typically associated with the ability of a structure to sustain {\sl adversarial} noise whilst maintaining a property of interest. Notions of resilience date back to the works of~\cite{KV04,KSV02}; however, it was the seminal result of Sudakov and Vu~\cite{SV08} which ushered the systematic study of this notion with an emphasis on (hyper)graph properties. For instance, the resilience of the Hamiltonicity property received much attention; see, e.g.~\cite{LS12,M19} and references therein. Such investigations that have propagated into High-Dimensional Probability~\cite{BFK17,FLM21} constitute our inspiration underlying this work.  

\medskip
\noindent 
\textbf{I. Vu's global rank resilience conjecture.} The determination of the statistical ubiquity of the singularity phenomenon in square random matrices is a problem that has its origins in Physics and has captured the attention of several mathematical communities such as High-Dimensional Probability, High-Dimensional Statistics, Combinatorics, and Theoretical Machine Learning. For an accurate account of the development of this problem through the years consult e.g.~\cite{T12}. 

In this venue, questions pertaining to the study of the singularity phenomenon of random matrices whose entries are discretely distributed have persisted the longest; see e.g.~\cite{BVWM10,KKS95,Komlos67,RV08,TV06,TV07,Tik,Vu21} and references therein. A high point in this line of research is the seminal result of Tikhomirov~\cite{Tik}, resolving an outstanding (and natural) conjecture, asserting that 
\begin{equation}\label{eq:conjecture}
\Pr\left\{M \sim \M_n \; \text{is singular} \right\} = \left(1/2 + o_n(1)\right)^n,
\end{equation}
holds, where $\M_{n,m}$ denotes the distribution of $n \times m$ Rademacher\footnote{A random variable $X$ is said to have the {\em Rademacher} distribution provided $\PROBp{X = -1} = 1/2 = \PROBp{X=1}$.} matrices\footnote{A random matrix is referred to as a {\em Rademacher matrix} provided its entries are i.i.d. copies of a Rademacher random variable.}; if $m = n$, then $\M_n$  is written instead of $\M_{n,n}$. More broadly still, the aforementioned result asserts that Rademacher matrices $M \sim \M_{n,m}$ are highly likely to have full rank. Additional highlights seen in the pursuit of~\eqref{eq:conjecture}, that we choose to accentuate, are the seminal results of Rudelson and Vershynin~\cite{RV08} bringing the theory of {\em anti-concentration} and specifically the so called problem of Littlewood and Offord~\cite{LO38} (as well as Erd\H{o}s~\cite{Erdos45}) into the fold; details pertaining to the latter follow below.   

In view of the above results pertaining to the pursuit of~\eqref{eq:conjecture} and in the wake of the seminal results addressing the {\sl resilience} of random graphs~\cite{KV04,SV08}, Vu~\cite{Vu08,Vu21} asked {\sl how resilient is the full rank property of Rademacher matrices with respect to adversarial entry-flips\footnote{Flips sfrom $+1$ to $-1$ and vice versa.}?}
More precisely, for a matrix $M \in \{\pm 1\}^{n \times m}$, $m \geq n$, write $\res(M)$ to denote the least number of entry-flips such that if performed on $M$ would produce a matrix whose rank is strictly less than $n$. As any two vectors $\ba,\bb \in \{\pm 1\}^n$ can be made to satisfy $\ba = \bb$ or $\ba = -\bb$ using at most $n/2$ flips; the bound $\res(M) \leq n/2$ then holds for any  $M \in \{\pm 1\}^{n \times m}$.  

\begin{conjecture}({\em Vu's global rank resilience  conjecture~\cite[Conjecture~11.5]{Vu21})\label{conj:Vu}\\
$\res(M) = (1/2 +o(1))n$ holds a.a.s.\footnote{Asymptotically almost surely; meaning with probability tending to one.} whenever $M \sim \M_n$. }
\end{conjecture}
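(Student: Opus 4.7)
The plan is to derive a matching lower bound $\res(M) \geq (1/2-o(1))n$ a.a.s., since the upper bound $\res(M) \leq n/2$ is trivial. I would approach this by combining (i) a row-by-row reduction to a partial-flip anti-concentration problem, (ii) sharp small-ball / inverse Littlewood-Offord estimates in the style of Rudelson--Vershynin and Tao--Vu, and (iii) a Tikhomirov-type randomness-inversion argument to pin down the exact base $1/2$.

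First, I would recast $\res(M) \leq k$ as: there exists a nonzero $\bv \in \mathbb{R}^n$ and a subset $S \subseteq [n]\times[n]$ with $|S| \leq k$ such that the flipped matrix $M^S$ satisfies $M^S \bv = 0$. Splitting $S$ into its row-restrictions $S_1,\ldots,S_n$ with $\sum_i|S_i| \leq k$, this becomes $n$ independent events: for each row $\br_i$ of $M$, there is a subset $T_i \subseteq [n]$ with $|T_i| = |S_i|$ such that $\sum_{j \in T_i} (\br_i)_j v_j = \tfrac12 \langle \br_i,\bv\rangle$. For fixed $\bv$, define $p_t(\bv) := \PROB[\exists T, |T|\leq t, \sum_{j\in T}\xi_j v_j = \tfrac12\langle \bxi,\bv\rangle]$ for a Rademacher row $\bxi$. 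This is a natural partial-flip small-ball quantity, which for $t=0$ reduces to the classical L\'evy concentration function of $\bv$. Bounding $p_t(\bv)$ in terms of arithmetic structure of $\bv$ (e.g.\ essential least common denominator) would be the first technical milestone.

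Next, I would run an incompressible/compressible dichotomy on $\bv$ in the spirit of Rudelson--Vershynin. For \emph{compressible} $\bv$ (close to sparse) the putative kernel vector forces many rows to have very specific structure, which can be handled by crude counting: the number of compressible directions is small enough that a direct union bound over $\bv$ and flip-patterns beats the row-wise probability $\prod_i p_{|S_i|}(\bv)$. For \emph{incompressible} $\bv$, the per-row small-ball $p_0(\bv) = O(n^{-1/2})$ can be boosted, via Halasz-type inverse anti-concentration together with a short combinatorial argument on how many flips can ``move'' $\langle \br,\bv\rangle$ onto a prescribed lattice value, to a bound roughly $p_t(\bv) \leq (c + o(1))^{1 - t/n}$ where $c$ approaches $1/2$ for unstructured $\bv$. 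Multiplying over rows and summing over $\sum_i |S_i| \leq k$ should then force $k \geq (1/2-o(1))n$ to avoid a contradiction. The count of flip patterns contributes a factor $\binom{n^2}{k}$, which is $2^{O(k\log n)}$ and must be absorbed by an exponential-in-$n$ slack — this is where keeping track of the exact base $1/2$ is delicate.

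The main obstacle is precisely this exact-base requirement. The naive union bound over $\bv$ and $S$ loses a $\log n$ factor in the exponent and produces $k \ll n/\log n$, which is what Proposition~\ref{thm::main::poly-res} and Theorems~\ref{thm::main-bf}--\ref{thm:qf} of the present paper already overcome in the polynomial setting but not in the $n\times n$ matrix setting with the tight constant. To recover the sharp constant $1/2$ one would likely need an analogue of Tikhomirov's randomness inversion: invert the roles of the matrix entries and the kernel vector so that the adversary's $n^2$-fold choice of flip positions becomes a coarse combinatorial variable against which the row randomness has full anti-concentration. Designing such an inversion compatible with the adversarial flip budget is, in my view, the genuinely hard step and the reason Conjecture~\ref{conj:Vu} remains open; any partial progress in this direction — e.g.\ establishing $\res(M) \geq cn$ for some explicit $c>0$ a.a.s.\ — would already be a substantial theorem and a natural precursor before attacking the full $c=1/2$.
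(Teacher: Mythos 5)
There is a genuine gap here, and it is structural rather than a fixable detail: the statement you are trying to prove is Conjecture~\ref{conj:Vu}, which is an \emph{open conjecture} of Vu. The paper does not prove it and does not claim to; it only records the trivial upper bound $\res(M) \leq n/2$, the observation that any exponential singularity bound plus a union bound gives $\res(M) = \Omega_\eps(n/\log n)$ a.a.s., and the Ferber--Luh--McKinley result, which establishes the conjectured lower bound only for rectangular matrices with $m \geq n + n^{1-\eps/6}$ extra columns. Your text is accordingly a research programme, not a proof, and you say so yourself in the final paragraph: the step that would make it a proof is exactly the step you defer (``designing such an inversion compatible with the adversarial flip budget'').

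Concretely, the unproven claims on which your outline rests are: (i) the per-row partial-flip small-ball bound $p_t(v) \leq (c+o(1))^{1-t/n}$ with $c \to 1/2$ for unstructured $v$ --- this is asserted by analogy with Hal\'asz-type inverse theorems but no such estimate is known, and its shape (a single row event with probability exponentially structured in $t/n$) is not what inverse Littlewood--Offord theory delivers; (ii) the absorption of the entropy factor $\binom{n^2}{k} = 2^{O(k\log n)}$ coming from the union over flip patterns --- this is precisely the loss that caps all current arguments at $k = O(n/\log n)$, and you do not indicate how to remove it beyond invoking a hoped-for Tikhomirov-style inversion; and (iii) the reduction itself quietly fixes a kernel vector $v$ of the flipped matrix and treats the rows independently, whereas rank deficiency only asserts the existence of \emph{some} $v$ depending on both $M$ and the adversary's flips, so the compressible/incompressible dichotomy must be run against an adversarially chosen, matrix-dependent direction, with the net/union-bound cost again interacting with the $\binom{n^2}{k}$ factor. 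Until these are supplied, what you have is a plausible roadmap toward a result that remains open; even your suggested intermediate goal, $\res(M) \geq cn$ a.a.s.\ for some explicit constant $c>0$, is not established in the literature cited by the paper.
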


\noindent
Vu's conjecture ventures further than~\eqref{eq:conjecture}. Any exponential bound $p_n \leq \eps^n$, for some fixed $\eps \in (0,1)$, coupled with a simple union-bound argument, yields $\res(M) = \Omega_\eps(n/\log n)$ holding a.a.s. whenever $M \sim \M_n$.  The leading intuition supporting Vu's conjecture is that in order to bring about rank-deficiency, one may restrict all flips to a small number of rows. 

Vu's conjecture is known to hold true if the number of columns exceeds the number of rows by an additive factor of order $o(n)$. This by a result of Ferber, Luh, and McKinley~\cite{FLM21} asserting that $\res(M) \geq (1-\eps)m/2$ holds a.a.s. whenever $M \sim \M_{n,m}$, $m \geq n + n^{1-\eps/6}$, and $\eps >0$ is independent of $m$ and $n$.

\medskip
\noindent
\textbf{II. Resilience of the Littlewood-Offord problem.} Products of the form $\ba^\trans \bxi$ with $\ba \in \mathbb{R}^n$ fixed and $\bxi \in \{\pm 1\}^n$ being a Rademacher vector\footnote{A vector whose entries are i.i.d. copies of a Rademacher random variable.}, are ubiquitous throughout probability theory. A classical result by Littlewood and Offord~\cite{LO38}, strengthened by Erd\H{o}s~\cite{Erdos45}, asserts that the largest {\sl atom} probability $\rho(\ba) :=\rho_{\ba^\trans \bxi}(\ba):= \sup_{x \in \mathbb{R}} \PROB[\ba^\trans \bxi = x]$ satisfies $\rho(\ba) = O(\|\ba\|_0^{-1/2})$ whenever $\ba \in \mathbb{R}^n$, where $\|\ba\|_0 := |\{i \in [n]: \ba_i \neq 0\}|$ is the size of the support of $\ba$; some classical generalisations of this result can be seen in~\cite[Remark~4.2]{CTV06}. In particular, if $\ba \in (\mathbb{R} \sm \{0\})^n$, then $\rho(\ba) = O(n^{-1/2})$; a bound which is asymptotically best possible for the all ones vector. As stated repeatedly throughout the relevant literature, this result is quite surprising; indeed, if the entries of $\ba$ have the same order of magnitude, then one may use the Berry-Esseen CLT (see  e.g.~\cite{Vershynin}) in order to attain the same bound on $\rho(\ba)$. The Erd\H{o}s-Littlewood-Offord result imposes essentially nothing on $\ba$, leading to the distribution of $\ba^\trans \bxi$ being possibly quite ``far" from Gaussian. In that, their result asserts that the CLT-bound coincides with the worst case estimate, leading to the wondering whether additional (arithmetic) assumptions imposed on the coefficients vector $\ba$ should manifest themselves in lower atom probabilities. This wondering has been solidified in a fairly long and substantial chain of results (see e.g.~\cite{Erdos65,Hal77,RV08,SS65,TV09,TaoVuBook}) making the relationship between the arithmetic structure of $\ba$ and $\rho(\ba)$ quite precise. Highly influential in this venue is~\cite[Theorem~1.5]{RV08} put forth by Rudelson and Vershynin in that it characterises $\rho(\ba)$ through the so called LCD-parameter of a sequence (see~\cite{RV08} for details). 
 
Inspired by the aforementioned conjecture of Vu as well as the relation between the Littlewood-Offord problem and the singularity of Rademacher matrices unveiled in~\cite{RV08}, a work by Bandeira, Ferber, and Kwan~\cite{BFK17} studies the resilience of the products $\ba^\trans \bxi$; asking: {\em how many adversarial flips can the entries of $\bxi$ sustain without forcing concentration on a specific value?} For $\ba \in (\mathbb{R} \sm \{0\})^n$, $\bxi \in \{\pm 1\}^n$, and $x \in \mathbb{R}$, write $\res_x^{\ba}(\bxi):= d_H(\bxi,L^{\ba}_x)$ to denote the Hamming distance between $\bxi$ and the level set $\mathcal{L}^{\ba}_x := \{\bpsi \in \{\pm 1\}^n: \ba^\trans \bpsi = x\}$; if reaching $x$ is impossible, then $\res_x^{\ba}(\bxi) = \infty$. If $\res_x^{\ba}(\bxi) > \ress$, then $\bxi$ is said to be $\ress$-{\em resilient} with respect to (the event) $\{\bpsi \in \{\pm 1\}^n: \ba^\trans \bpsi \neq x\}$. Roughly stated, $\res_x^{\ba}(\bxi) = \Omega(\log \log n)$ holds a.a.s. for any $x \in \mathbb{R}$ and $\ba \in (\mathbb{R}\sm 0)^n$~\cite[Theorem~1.8]{BFK17} with tightness established in~\cite[Theorem~1.7]{BFK17}.

\medskip

These results by Bandeira, Ferber, and Kwan constitute our main source of inspiration.

\subsection{Main results}

In this section, we state our main result, namely Theorem~\ref{thm:main-poly}; its formulation is provided in Section~\ref{sec:statements} and a discussion as to its impact is delegated to Section~\ref{sec:analysis}. 

\bigskip

A function $f: \mathbb{R}^{n_1} \times \cdots \times \mathbb{R}^{n_d} \to \mathbb{R}$ of the form
\begin{equation}\label{eq:poly-def}
f(\bx_1,\ldots,\bx_d) := \sum_{\bi \in \indsetp{d}} f_{\bi} \cdot (\bx_1)_{\bi_1}\cdots (\bx_d)_{\bi_d},
\end{equation}
where $\indsetp{d} := \Pi_{i\in[d]} [n_i]$ and $f_{\bi} \in \mathbb{R}$, is referred to as a (real) {\em multilinear polynomial/chaos of degree/order $d$}. The coefficients of $f$, namely $(f_{\bi})_{\bi \in \indsetp{d}}$, form a $d$-mode tensor and we use the latter notation to abbreviate~\eqref{eq:poly-def}. 
In that, it is beneficial to introduce the additional notation, namely $f(\bx_1 \circ \cdots \circ \bx_d)$,  along side the functional one, i.e. $f(\bx_1,\ldots,\bx_d)$, where $\circ$ denotes the outer product operation. 
We write $f\not\equiv 0$ to denote that at least one of the coefficients of $f$ is non-zero. Given independent Rademacher vectors\footnote{A vector $\bxi \in \{\pm 1\}^n$ whose entries are independent Rademacher random variables is called a {\em Rademacher vector}.}
$\bxi_1\in\{\pm1\}^{n_1},\dots,\bxi_d\in\{\pm1\}^{n_d}$,
write $\bXi := \bxi_1 \circ \cdots \circ \bxi_d$ and call $f(\bXi)$ a {\em decoupled Rademacher polynomial/chaos of degree/order $d$}. 

\bigskip

In view of the aforementioned work of Bandeira, Ferber, and Kwan~\cite{BFK17} for linear Rademacher chaos, a natural follow-up question to pose is: 
\begin{center}
{\sl how resilient are (decoupled) Rademacher chaos of arbitrary order?} 
\end{center}
Given a polynomial $(f_{\bi})_{\bi \in \indsetp{d}}$ as in~\eqref{eq:poly-def}, a real $x \in \mathbb{R}$, as well as  vectors 
$\bv_1\in\{\pm1\}^{n_1},\dots,\bv_d\in\{\pm1\}^{n_d}$,
write 
$$
\res_x^f(\bv_1,\ldots,\bv_d) := d_H\left((\bv_1,\ldots,\bv_d), \mathcal{L}_x^f\right) := \min_{(\bu_1,\ldots,\bu_d) \in \mathcal{L}_x^f} d_H\big((\bv_1,\ldots,\bv_d),(\bu_1,\ldots,\bu_d)\big),
$$
where 
$$
\mathcal{L}_x^f := \Big\{(\bu_1,\ldots,\bu_d) \in
 \{\pm 1\}^{n_1} \times \cdots \times \{\pm 1\}^{n_d}
: f(\bu_1,\ldots,\bu_d) = x\Big\}
$$
and $d_H\big((\bv_1,\ldots,\bv_d),(\bu_1,\ldots,\bu_d)\big)$ denotes the Hamming distance between the ensembles $(\bv_1,\ldots,\bv_d)$ and $(\bu_1,\ldots,\bu_d)$ taken as vectors in 
$\{\pm 1\}^{\sum_{i=1}^d n_i}$.
In that, $\res_x^f(\bv_1,\ldots,\bv_d)$ is the least amount of sign-flips that if performed on the members of $\bv_1,\ldots,\bv_d$ would generate an ensemble of vectors over which $f$ assumes the value $x \in \mathbb{R}$. If $\res_x^f(\bv_1,\ldots,\bv_d) > \ress$, for some $\ress \in \mathbb{N}$, then $\bv_1,\ldots,\bv_d$ is said to be $\ress$-{\em resilient with respect to $x \in \mathbb{R}$ along $f$}. If $\inf_{x \in \mathbb{R}} \res_x^f(\bv_1,\ldots,\bv_d) > \ress$, then $(\bv_1,\ldots,\bv_d)$ is said to be $\ress$-{\em resilient} (along $f$). If all vectors $(\bv_i)_{i\in [d]}$ coincide into a single vector, namely $\bz$, then we write $\res_x^f(\bz)$ instead of $\res_x^f(\bz,\ldots,\bz)$; necessary adaptations of the level sets and Hamming distance apply.

For a Rademacher tensor $\bXi$, the quantity $\res_x^f(\bXi)$ is its {\em resilience} with respect to the event $\{f(\bXi) \neq x\}$. Given $\ress \in \mathbb{N}$, upper bounds on 
\begin{equation}\label{eq:general-form}
\sup_{x\in \mathbb{R}} \PROBp{\res_x^f(\bXi) \leq \ress}
\end{equation}
bound the probability that $\bXi$ is not $\ress$-resilient.
Motivated by comfort, we sometime abuse the notation set here and attribute resilience to the polynomial itself when we write that $f$ or $f(\bXi)$ is $\ress$-resilient (or not). All of our main results, stated in the next section, deliver upper bound on~\eqref{eq:general-form} for various choices of $f$. 

To derive probabilistic lower-bound guarantees on the resilience of $f(\bXi)$ using our results, we seek the ``largest'' $r \in [n_{\max}]$, where $n_{\max}:= \max_i n_i$, for which 
$$
\sup_{x\in \mathbb{R}} \PROBp{\res_x^f(\bXi) \leq \ress} = o(1)
$$
can be shown to hold using our results. For any such $r$ identified, we then claim that the resilience of $f(\bXi)$ is a.a.s.\ at least $r$. More precisely, given $g:\mathbb N\to\mathbb N$, we say that $\res_x^f(\bXi) = \Omega(g(n))$ holds a.a.s.\ for all $x\in\R$ if for any $\ress(n)=o(g(n))$, the bound
$
\sup_{x\in \mathbb{R}} \PROBp{\res_x^f(\bXi) \leq \ress(n)} = o(1)
$
holds. 

\subsubsection{Statement of the main result}\label{sec:statements}

\paragraph{Outline.} As declared above, our main result is Theorem~\ref{thm:main-poly}. It pertains to the resilience of arbitrary degree (decoupled) Rademacher chaos. 
As the formulation of Theorem~\ref{thm:main-poly} requires a certain amount of effort, we propose a {\sl\bf model result}, namely Theorem~\ref{thm::main-bf}, 
targeting the resilience of (decoupled) Rademacher chaos of order two. This model result admits a significantly simpler formulation and thus allows us to deliver a concrete result to the reader avoiding a hefty preparation at an early stage. Moreover, certain key ideas employed in the proof of our main result, namely Theorem~\ref{thm:main-poly}, admit a simpler presentation in the proof of Theorem~\ref{thm::main-bf}. With this in mind, the latter is presented first albeit not completely identical to Theorem~\ref{thm:main-poly}. The proofs of the two results can be read independently and the reader interested in the more general result only can skip Theorem~\ref{thm::main-bf} altogether. 
 
For the interested reader, allow us to remark here that proof-wise the crucial differences between the model result, namely Theorem~\ref{thm::main-bf}, and our main result, namely  Theorem~\ref{thm:main-poly}, lie in the methods employed carrying out the same conceptual framework driving the proofs of each of these results. To prove the former, we employ Dudley's maximal inequality for sub-gaussian processes~\cite[Lemma~5.1]{van2014probability} (see Theorem~\ref{thm::dudley-max}), the Kolmogorov-Rogozin anti-concentration inequality~\cite{kolmogorov1958proprietes,rogozin1961estimate,rogozin1961increase} (see Theorem~\ref{thm:KRI}), as well as the Hanson-Wright concentration inequality~\cite[Theorem~2.1]{rudelson2013hanson}. The transition from order two Rademacher chaos to ones with arbitrary degree leads, somewhat naturally, to losing the ability to appeal to tools utilising sub-gaussianity; in that, the classical results of Dudley and Hanson-Wright are rendered inadequate in the new setting. For instance, the latter is replaced by an appeal to a concentration result generalising the Hanson-Wright inequality and put forth by Adamczak and Wolff~\cite[Theorem~1.4]{AW15} (see Theorem~\ref{thm:Adamcask}).

Similar to many of the anti-concentration results mentioned in the introduction, our resilience results are instance-dependent as well and consequently do not provide worst case estimates for the resilience. If the degree of the Rademacher chaos is fixed, then our results provide a way to efficiently compute probabilistic lower-bound guarantees for the resilience of said chaos. 

Further appreciation for our main result, can be attained through a discussion of the aforementioned theorems found in Section~\ref{sec:analysis} pursuing resilience guarantees  along an arc starting from the identity matrix, passing through block-diagonal matrices, and then culminating in block-diagonal high-dimensional tensors. 


\paragraph{A model result: Decoupled bilinear Rademacher chaos} For $M \in \mathbb{R}^{n \times m}$, set $\|M\|_\infty := \max_{i,j} |M_{ij}|$ as well as 
$$
\|M\|_{\infty,2}  := \max_{\bmm \in \mathrm{Rows}(M)} \|\bmm\|_2 \quad \text{and} \quad
\|M\|_{\infty,0}  :=\max_{i\in[n]} \big|\{M_{ij}\neq 0: j\in [n]\}\big|, 
$$
where $\mathrm{Rows}(M)$ denotes the set of rows of $M$. By the triangle inequality,   
$$
\mathrm{diam}\big(\mathrm{Rows}(M)\big) := \max_{\ba,\bb \in \mathrm{Rows}(M)} \|\ba-\bb\|_2 \leq 2 \|M\|_{\infty,2}
$$ 
holds, where $\mathrm{diam}(X)$ denotes the diameter of $X \subseteq \mathbb{R}^n$. The quantity $\|M\|_{\infty,0}$ is the maximal support amongst the rows of $M$.
All of the above quantities are defined for the columns of $M$ by substituting $M$ with $M^\trans$. In that, write 
\begin{align*}
\mathrm{maxsupp}(M) &:= \max \big\{ \|M\|_{\infty,0}, \|M^\trans\|_{\infty,0} \big\}\\
\mathrm{maxdiam}(M) &:= \max \big\{ \|M\|_{\infty,2}, \|M^\trans\|_{\infty,2} \big\}.
\end{align*}
The {\em stable rank} of $M$ is given by $\sr(M) := \|M\|_\frob^2/\|M\|_2^2$, where $\|M\|_\frob$ denotes its Frobenius norm and $\|M\|_2$ denotes its spectral norm. 
The following quantities 
\begin{align*}
f(M,n) & := \frac{\min\Big\{ \mathrm{maxsupp}(M) \cdot \|M\|_\infty\;,\; \sqrt{\log n} \cdot \mathrm{maxdiam}(M)\Big\}}{\|M\|_\frob},\\
\qquad g(M,\ress) &:= \frac{\min\Big\{\ress\;,\;\mathrm{maxsupp}(M)\Big\} \cdot \|M\|_\infty}{\|M\|_\frob}
\end{align*}
arise in the formulation of our results below. Insight into these is offered in Section~\ref{sec:analysis}; at this stage let us make do with noting that 
both these quantities can be efficiently computed given $M$. 

\medskip

With the above notation in place, we are in position to state our model result pertaining to the resilience of bilinear Rademacher chaos. 

\begin{theorem}
\label{thm::main-bf}
Let $0 \not\equiv M \in \R^{n \times m}$. There are constants $c_1,c_2,c_3>0$ such that for any integer $\ress\in [n]$,
\begin{equation}\label{eq:prop_bf}
\sup_{x\in\R}\PROBp{\res_x^M(\bpsi,\bxi) \leq \ress}
 \leq c_1 r\cdot f(M,n) + c_2 \ress \cdot g(M,\ress) + \exp{\left(-c_3\sr(M)\right)},
\end{equation}
where $\bpsi \in \{\pm 1\}^{m}$ and $\bxi \in \{\pm 1\}^{n}$ are independent Rademacher vectors and the chaos has the form $\bpsi^\trans M \bxi$.
\end{theorem}


To mitigate the (still) rather abstract formulation of Theorem~\ref{thm::main-bf}, we propose the following more transparent corollary of the latter, deduction of which we delegate to Appendix~\ref{par:deduce}. Corollary~\ref{cor:transparent} distinguishes between so called {\sl sparse} and {\sl dense} matrices and states resilience guarantees for each regime. This distinction is afforded to us by the minimisations seen in the terms $f(M,n)$ and $g(M,r)$ defined above. In the sparse regime, tightness of the estimates is reached (see Section~\ref{par:identity} for details). 

\begin{corollary}\label{cor:transparent}
Let $0 \not\equiv M \in \mathbb{R}^{m \times n}$ satisfying $\sr(M) = \omega(1)$  as well as 
$$
\|M\|_\infty = 1,\; \|M\|_{\infty,0} = \mathrm{maxsup}(M), \; \text{and}\; \|M\|_{\infty,2} = \mathrm{maxdiam}(M)
$$
be given. 
Then, 
\begin{enumerate}
    \item {\bf Sparse regime: $\boldsymbol{\|M\|_{\infty,0} \leq \log n}$.} A.a.s.\ $M$ has resilience $\Omega\left(\frac{\|M\|_\frob}{\|M\|_{\infty,0}}\right)$ and this is asymptotically tight\footnote{See Section~\ref{par:identity}.} for diagonal matrices whose diagonal entries have the same order of magnitude.   

    \item {\bf Dense regime: $\boldsymbol{\|M\|_{\infty,0} \geq \log n}$.} 

    \begin{enumerate}
        \item If $\|M\|_\frob > \|M\|_{\infty,0}^2$, then resilience is a.a.s. $\Omega\left(\frac{\|M\|_\frob}{\|M\|_{\infty,0}}\right)$;

        \item Otherwise resilience is a.a.s. 
        $$
        \Omega \left( \min \left\{\frac{\|M\|_\frob}{\sqrt{\|M\|_{\infty,0}\log n}} \;,\; \sqrt{\|M\|_\frob} \right\} \right).
        $$
    \end{enumerate}
    
\end{enumerate}
\end{corollary}

\begin{remark}\label{rem:limitation}{\bf (Limits of our methods)}
{\em  Subject to $\|M\|_\infty =1$ seen in Corollary~\ref{cor:transparent}, the trivial bound $\norm{M}_\frob \leq \sqrt{n\norm{M}_{\infty,0}}$ holds implying that the $\Omega(\sqrt{n})$ bound for resilience in 
Corollary~\ref{cor:transparent} cannot be breached by our methods and we suspect that its estimates are far from the truth as can be seen in Section~\ref{par:identity} if, say, the stable rank assumption is removed. More generally, and as mentioned above, our methods provide instance-dependent estimates. It would be interesting to improve upon our estimates throughout.
}
\end{remark}

\begin{remark} {\bf (Stable rank)}
{\em Theorem~\ref{thm::main-bf} is meaningful provided $\sr(M) = \omega(1)$, where $M$ is per those theorems. This condition is incurred through an appeal to the Hanson-Wright inequality. The wide-spread use of the latter throughout High-Dimensional Probability, Statistical Learning, and Compressed Sensing, see e.g.~\cite{KRu18,rudelson2013hanson,Vershynin} and references therein, renders the condition $\sr(M) = \omega(1)$ to be fairly standard in these venues. Indeed, such an imposition can be seen in the commonly used small-ball probability inequality~\cite[Corollary~2.4]{rudelson2013hanson} and variants thereof; in Smoothed Analysis results such as~\cite[Theorem~3.1]{KR19} (and references therein) and so on.}    
\end{remark}

\begin{remark}\label{rem:quadratic}{\bf (Resilience of quadratic Rademacher chaos)}
{\em In Appendix~\ref{sec:quadratic-app}, we utilise Theorem~\ref{thm::main-bf}, an adaptation of a  decoupling argument seen in~\cite{CTV06}, as well as Dudley's maximal {\sl tail} inequality~\cite[Lemma~5.2]{van2014probability} for sub-gaussian processes (see Theorem~\ref{thm::Dudley-tail}), in order to provide probabilistic estimates for quadratic Rademacher chaos. For details, consult Theorem~\ref{thm:qf}.}    
\end{remark}

\paragraph{Main result: Decoupled Rademacher chaos of arbitrary degree}  
Our main result, namely Theorem~\ref{thm::main-bf} is stated next; its statement requires preparation. Let $2 \leq d\in \mathbb{N}$ be given.  A degree~$d$ polynomial given by $(f_{\bi})_{\bi \in \indset{d}}$ as in~\eqref{eq:poly-def} satisfying $n_i = n$ for every $i \in [d]$ has its {\em dimensions} captured by the set $[d]$ where for each dimension $i \in [d]$ there are $n$ {\em directions} associated with the set $[n]$. For such a polynomial, write $\|f\|_\frob^2 := \sum_{\bi \in \indset{d}} f_{\bi}^2$ and $\|f\|_\infty := \max_{\bi \in \indset{d}} |f_{\bi}|$. Given independent Rademacher vectors $(\bxi_i \in [\pm 1]^{n})_{i\in [d]}$, recall that we write $\bXi = \bxi_1 \circ \cdots \circ \bxi_d$ to denote the rank one decoupled Rademacher tensor. Given $I \subseteq [d]$, write 
$\bXi_{\quot{I}}$ (pronounced ``Xi quotient $I$") to denote the (partial decoupled) Rademacher tensor obtained by omitting the members of $(\bxi_i: i \in I)$ from $\bXi$; write $\bXi_{\quot i}$ instead of $\bXi_{\quot{\{i\}}}$. 

\medskip
\noindent
{\tt Partial derivatives through matrixisation.} The first term required for the statement of Theorem~\ref{thm:main-poly} is $\GameNorm$ defined for every $i \in [d]$ and every {\sl even} $k \in [2(d-1)]$. As the symbol $\bm{\Game}$ suggests, this term is associated with the partial derivatives of order $k$ ($k$th-{\em derivatives}, hereafter) of a certain (Rademacher) polynomial related to $f$ (and disclosed momentarily). In fact, the quantity $\GameNorm$ serves as an upper bound on certain operator norms of a tensor housing the expectations of said derivatives. This related polynomial arises through a specific matrixisation of $f$ defined next. Partial derivatives of said polynomial are encountered through our appeal to the aforementioned Adamczak-Wolff concentration result (see Theorem~\ref{thm:Adamcask}).

\medskip

Let $(f_{\bi})_{\bi \in \indset{d}}$ be as in~\eqref{eq:poly-def}. 
Given $i \in [d]$, define $\mathcal{A}_i := \mathcal{A}^{(f)}_i$ to be the $n \times n^{d-1}$-matrix whose entries are the coefficients of $f$ arranged in $\mathcal{A}_i$ according to 
\begin{equation}\label{eq:order-coeff}
(\mathcal{A}_i)_{j,{\bm \ell}} := f_{j \overset{i}{\hookrightarrow} {\bm \ell}},
\end{equation}
whenever $j \in [n]$, ${\bm \ell} := (\ell_1,\ldots,\ell_{i-1},\ell_{i+1},\ldots,\ell_d) \in [n]^{d-1}$, and where $j \overset{i}{\hookrightarrow} {\bm \ell}$ denotes the $d$-tuple given by 
$$
j \overset{i}{\hookrightarrow} {\bm \ell} := (\ell_1,\ldots,\ell_{i-1},j,\ell_{i+1},\ldots,\ell_d).
$$
The matrix $\mathcal{A}_i$ allows us to isolate $\bxi_i$ from $\bXi$ and write $f(\bXi)\equiv\bxi_i^\trans \mathcal{A}_i \mathrm{vec}({\bXi_{\quot{i}}})$ for all $i\in[d]$, where $\mathrm{vec}(\cdot)$ denotes the classical vectorisation operation for tensorial products.

\medskip

Proceeding to the definition of $\GameNorm$ for $i \in [d]$ and an {\sl even} $k \in [2(d-1)]$, start by 
setting the mapping $\bij_i:[n]^{d-1} \to \big(([d]\sm\{i\})\times[n]\big)^{d-1}$, given by 
\begin{equation}\label{eq:phi}
\bij_i(\bj):= \big( (1,\bj_1), \dots, (i-1,\bj_{i-1}), (i+1,\bj_i),\dots, (d,\bj_{d-1
})\big)
\end{equation}
whenever $\bj\in[n]^{d-1}$; in that, $\bij_i(\bj)$ is a sequence of ordered pairs first member of each indicates the index of a vector $\bxi_\ell$ with $\ell \in [d] \sm \{i\}$; the second member of each pair records the entry of $\bxi_\ell$ to be referenced through $\bj$. 

The mapping $\bij_i$ is utilised as follows. Given an even integer $k \in 2(d-1)$, $i \in [d]$, as well as a tuple of pairs $\bell \in \big(([d] \sm \{i\}) \times [n]\big)^k$, set
\begin{equation}\label{eq:ext}
\Ext_{i,k}(\bell) := \left\{(\bj,\bk) \in [n]^{d-1} \times [n]^{d-1}: \big(\bij_i(\bj) \cup \bij_i(\bk) \big) \sm \big(\bij_i(\bj) \cap \bij_i(\bk) \big) = \bell \right\}
\end{equation}
to denote the {\em extension set of $\bell$} (with respect to $i$ and $k$). Indeed, through the lens of the mapping $\bij_i$, sequences $(\bj,\bk)$ found in the extension set of $\bell$  have their intersection (or common subsequence subject to $\bij_i$) extended by $\bell$. 
For an even $k \in  [2(d-1)]$, extensions are only considered for so called {\em relevant} tuples, by which we mean tuples of the form 
$$
\big((b_1,c_1),\dots,(b_{\frac k 2},c_{\frac k 2}),(b_1,c'_1),\dots,(b_{\frac k 2},c'_{\frac k 2})\big)\in \big([d]\setminus\{i\}\times[n]\big)^k
$$
equipped with the property that all $b$-elements are distinct of one another and for every $i \in [k]$ the corresponding entries, namely $c_i$ and $c'_i$, do not coincide with one another. We then set 
\begin{align} \label{eq:Rel}
    \Rel_{i,k} = \left\{\bell \in \big([d]\setminus\{i\}\times[n]\big)^k: \bell \; \text{is relevant} \right\}.
\end{align}

With the above notation in place, define 
\begin{equation}\label{eq:partial}
\GameNorm^2 := \sum_{\bell\in \Rel_{i,k}}
\left(
\sum_{(\bj,\bk) \in \Ext_{i,k}(\bell)} 
(\mathcal{A}_i^\trans\mathcal{A}_i)_{\bj \bk}
\right)^2. 
\end{equation}
Roughly put, the external sum defining $\GameNorm$ defines a relevant extension sequence; the inner sum then ranges over all pairs $(\bj,\bk) \in [n]^{d-1} \times [n]^{d-1}$ that beyond their intersection (as seen through $\bij_i$) coincide with $\bell$ and in that sense $\bell$ extends their intersection (defined by $\bij_i$).  

The combinatorial quantity $\GameNorm$ has an enigmatic feel to it and deserves much explaining. True understanding of this quantity can be obtained at its origin, namely the proof of Lemma~\ref{lem:v-concentration}, where the concentration properties of the (quadratic-looking) Rademacher polynomial 
$$\mathrm{vec}(\bXi_{\quot{i}})^\trans \mathcal{A}_i^\trans\mathcal{A}_i \mathrm{vec}(\bXi_{\quot{i}})$$ related to $f$ are considered. Prior to this lemma, one way to mitigate the enigmatic nature of $\GameNorm$ at this preliminary and declarative stage is to consider Appendix~\ref{par:block-diagonal} where we apply Theorem~\ref{thm:main-poly} to $d$-mode tensors; there a direct handling of the quantity $\GameNorm$ can be seen thus removing some of the obfuscation accompanying this quantity.

The matrix $\mathcal{A}_i^\trans\mathcal{A}_i$ appearing in the definition of $\GameNorm$ is referred to as the {\em correlation matrix of $f$ at dimension $i$}; in that, given $\bu,\bv \in [n]^{d-1}$, entries of the form $\left(\mathcal{A}_i^\trans \mathcal{A}_i\right)_{\bu \bv}$ capture the inner products between fibres of $f$ indicated by the indices $\bu$ and $\bv$. The need for the mapping $\phi_i$, seen in the definition of $\GameNorm$, arises from a certain technical nuisance divulged in the proof of Lemma~\ref{lem:v-concentration}.

\medskip
\noindent
{\tt Restrictions.} Given a set of dimensions $\emptyset \neq I \subseteq [d]$ as well as a tuple of directions $d_I := (d_i: i \in I) \in \indset{|I|}$, define the {\em selection} function $\bdelta_{I,d_I}(\cdot)$ which maps a dimension $j \in [d]$ to
\begin{equation}\label{eq:selection}
\bdelta_{I,d_I}(j) := 
\begin{cases}
    \be_{d_i}, & j = i \in I;\\
    \bxi_j, & j \notin I, 
\end{cases}
\end{equation}
where $\be_{d_i}$ denotes a standard vector in $\mathbb{R}^{n}$ having all entries but its $d_i$th entry set to zero and the $d_i$th entry set to one. The (decoupled) Rademacher chaos given by 
\begin{equation}\label{eq:restriction}
f_{I,d_I}(\bXi_{\quot{I}}) : = f(\bdelta_{I,d_I}(1),\ldots,\bdelta_{I,d_I}(d))
\end{equation}
is said to be a {\em restriction} of $f$ in the sense that in the dimensions specified by $I$ no randomness is retained and instead the directions specified by $d_I$ are fixed (through the coefficients). In that, $f_{I,d_I}(\bXi_{\quot{I}})$
does not depend on the vectors $(\bxi_i : i \in I)$ which are suppressed in it, so to speak.
The restriction $f_{I,d_I}$ can be viewed as a
$(d-|I|)$-mode tensor by considering its coefficients inherited from $f$ indices of which at dimensions $I$ are fixed to $d_I$. In this regard, the norms $\|f_{I,d_I}\|_\infty$ and $\|f_{I,d_I}\|_\frob$ are defined analogously to their respective counterparts, namely $\|f\|_{\infty}$ and $\|f\|_\frob$, over the coefficients of the restriction $f_{I,d_I}$. Examples of restrictions can be seen in Section~\ref{sec:tensor-restrictions} as well as Appendix~\ref{par:block-diagonal}.

\medskip
Our main result reads as follows. 

\begin{theorem}\label{thm:main-poly}
Let $2 \leq d \in \mathbb{N}$ be given and let $(f_{\bi})_{\bi \in \indsetp{d}}$ be as in~\eqref{eq:poly-def} such that $f \not\equiv 0$ and satisfying $n_i = n = \omega(d^d)$ for every $i \in [d]$. Let $\bxi_1,\ldots,\bxi_d$ be independent Rademacher vectors conformal with the dimensions of $f$. Then, 
{\small 
\begin{align}
\sup_{x \in \mathbb{R}} & \PROBp{\res_x^f(\bXi) \leq r} = \label{eq:main-poly-res}\\ 
& 
O_d(1) \log^{\frac{d}{2}}(n) 
\sum_{I \subseteq [d]: I\neq\emptyset}
r^{|I|}
     \max_{d_I \in \indset{|I|}}
     \frac{\norm{f_{I,d_I}}_\frob}{\|f\|_\frob}
  + 
  O_d(1)\exp \left(-\Omega_d(1) \left( \frac{ \|f\|^2_\frob}{\lVert\bm\Game f\rVert}\right)^{ 1/{\Theta_d(1)}} \right)\nonumber
\end{align}
}
holds, whenever $r \in [n]$, where 
$$
\lVert\bm\Game f\rVert :=  \max_{i \in [d]}\,\max_{\substack{k \in [2(d-1)]  \\ k\; \text{even}}} \GameNorm,
$$
and where $O_d(1)$, $\Omega_d(1)$, and $\Theta_d(1)$ denote quantities dependent solely on $d$. 
\end{theorem}


To gain insight into the formulation of Theorem~\ref{thm:main-poly}, one should draw a comparison between the formulation of the latter and that of Theorem~\ref{thm::main-bf} - the model result. Additional aid in unpacking Theorem~\ref{thm:main-poly} can be found below, in Section~\ref{para:block-diag-tensors}, where we provide a working example employing Theorem~\ref{thm:main-poly} over block-diagonal tensors. Through this working example, one gains access to the various components seen on the right hand of~\eqref{eq:main-poly-res} disclosing their respective roles in a more vivid manner. 

Whilst Theorem~\ref{thm:main-poly} fits a fairly general setting, we suspect that the estimates it provides for resilience are not optimal. 
It would be interesting to have our estimates reachable through Theorem~\ref{thm:main-poly} improved upon.

\begin{remark}
{\em The quantities 
$O_d(1)$ appearing in~\eqref{eq:main-poly-res} are exponential in $d$. The $\Theta_d(1)$ quantity is linear in $d$. A more accurate formulation of Theorem~\ref{thm:main-poly} is provided in Theorem~\ref{thm:main-poly-2} where the raw nature of these quantities can be seen.}
\end{remark}

\begin{remark}
{\em The sum
$$
\sum_{I \subseteq [d]: I\neq\emptyset}
r^{|I|}\cdot \max_{d_I \in \indset{|I|}}
\frac{\norm{f_{I,d_I}}_\frob}{\norm{f}_\frob}
$$ 
appearing on the right hand side of~\eqref{eq:main-poly-res} exhibits a certain trade-off behaviour. The larger is the set $I$ in the sum, the larger is the exponent of $\ress$ and the smaller is the quantity $\max_{d_I \in \indset{|I|}} \|f_{I,d_I}\|_\frob$.}
\end{remark}

\begin{remark}\label{rem:Vershynin}
{\em The term $\GameNorm$ introduced in~\eqref{eq:partial} is incurred through an appeal to the aforementioned concentration result of Adamczak and Wolff~\cite[Theorem~1.4]{AW15} (see Theorem~\ref{thm:Adamcask} for an abridged and significantly weaker formulation). The latter entails certain operator norms of expectations of partial derivatives be handled. The Adamczak-Wolff result, whilst best possible in its venue, is accompanied with a high level of abstraction, as the proof of Lemma~\ref{lem:v-concentration} illustrates, and is not easy to wield, so to speak. Alternatives to this generalisation of the Hanson-Wright inequality that offer some more ease of use are known. One such alternative is a result by Verhsynin~\cite[Theorem~1.4]{vershynin2020concentrationinequalitiesrandomtensors}; another is an improvement on the latter attained by  Bamberger, Krahmer, and Ward~\cite[Theorem~2.1]{BKW22}. 

Alas, these more readily deployable results are inadequate for our needs hence our appeal to the Adamczak-Wolff result. Roughly put, employing the former in our argument would render the exponential term seen on the right hand side of~\eqref{eq:main-poly-res} to potentially be as large as 
\begin{equation}\label{eq:exp-vershynin}
\exp \left(- C_d \frac{\|f\|^2_{\frob}}{n^{d-2} \|\mathcal{A}_i^\trans \mathcal{A}_i\|_2} \right),
\end{equation}
where $C_d >0$ depends on $d$, 
and it is this factor of $n^{d-2}$ seen here that our analysis cannot bear; indeed, since $\mathcal{A}_i$ has rank at most $n$ and $\|f\|^2_\frob = \|\mathcal{A}_i\|^2_\frob\leq n \|\mathcal{A}_i\|^2_2 =n\|\mathcal{A}_i^\trans\mathcal{A}_i\|_2$, the bound \eqref{eq:exp-vershynin} is vacuous for $d\geq 3$.   
The Adamczak-Wolff result allows us to avoid this term. An example of these savings, afforded to us by the Adamczak-Wolff result, can be seen in Claim~\ref{clm:block-diagonal}, where Theorem~\ref{thm:main-poly} is used in order to provide resilience guarantees for $d$-mode
block-diagonal Rademacher tensors.}  
\end{remark}

\bigskip
\noindent
{\bf Organisation.} Theorem~\ref{thm::main-bf} - our model result -  is proved in Section~\ref{sec:quad-bi}. Theorem~\ref{thm:main-poly} - our main result - is proved in Section~\ref{sec:main-poly}. 

\bigskip
\noindent
{\bf Acknowledgements.} We would like to thank Matthew Kwan and Lisa Sauermann for their comments on a previous version of the manuscript.

\subsubsection{Discussion}\label{sec:analysis}

In this section, we employ our results for obtaining lower bound resilience guarantees along an arc starting with the identity matrix, passing through block-diagonal matrices, and culminating in block-diagonal high-degree tensors.

\paragraph{The identity matrix} \label{par:identity}
 The decoupled bilinear Rademacher chaos  $\bpsi^\trans I_n \bxi = \sum_{i=1}^n \bpsi_i \bxi_i$ has the same distribution as 
$\sum_{i=1}^n \zeta_i$ with $\bzeta := (\zeta_1,\ldots,\zeta_n)$ a Rademacher vector. As seen in \cite[Example~1.3]{BFK17}, the equality 
$\res_0^{I_n}(\bpsi,\bxi) = \Theta(\sqrt{n})$ holds asymptotically almost surely. Noting that $\sr(I_n) = \omega(1)$, Corollary~\ref{cor:transparent} assertion for the sparse regime yields that the resilience of $I_n$ is a.a.s. $\Omega(\sqrt{n})$. This analysis extends to any diagonal matrix whose entries have the same order of magnitude. The aforementioned asymptotic tightness stipulated in Corollary~\ref{cor:transparent} is then established. 

\medskip
Allowing the entries of such matrices to have different orders of magnitudes, introduces diagonal matrices such as $D:= \mathrm{diag}(1,2^{-1},2^{-2},\ldots,2^{-(n-1)})$. Following~\cite[Example~1.5]{BFK17}, the resilience of the latter is $\Omega(n)$ asymptotically almost surely. Noting that $\sr(D) = O(1)$,  the matrix $D$ is not captured by our results for the bilinear and quadratic cases. Nevertheless, it is safe to say that the family of matrices $M$ satisfying $\sr(M) = \omega(1)$ is significantly richer than the set of matrices $M$ satisfying $\sr(M) = O(1)$.

\paragraph{Block-diagonal matrices}\label{par:block-matrix} The next member of the arc examined in terms of resilience estimations is the block-diagonal $n\times n$-matrix $M_w$ with block width $w$ satisfying $w \mid n$. For brevity, assume further that the latter is a $0/1$-matrix. As such, Corollary~\ref{cor:transparent} is made applicable for $M_w$ provided $w = o(n)$ holds; the latter is required in order to impose $\sr(M_w) = \omega(1)$.  Applying the aforementioned corollary yields that for any $x \in \mathbb{R}$ a.a.s., 
\begin{equation}\label{eq:block-matrix}
\res_x^{M_w}(\bpsi,\bxi) = \begin{cases}
    \Omega\left(\sqrt{\frac{n}{w}} \right), & 1 \leq w \leq n^{1/3},\\
    \Omega\left(\sqrt{\blw}(\frac n w)^{\frac 1 4}\right), & n^{1/3} < w \leq n/\log^2 n,\\
    \Omega\left(\sqrt{\frac{n}{\log n}}\right), & n / \log^2 n < w = o(n)
\end{cases}
\end{equation}
holds, where $\bpsi$ and $\bxi$ are per Corollary~\ref{cor:transparent}. The so called $\sqrt{n}$-limitation of our methods, mentioned in Remark~\ref{rem:limitation}, is illustrated here. 

\paragraph{Block-diagonal tensors}\label{para:block-diag-tensors}
At the end of our arc there lies the structure of block-diagonal tensor of arbitrary degree. In that, define the $d$-mode symmetric $\ell$-{\em scaled} tensor with {\em block-width} $\blw \in \mathbb{N}$ of dimensions $\underbrace{ n \times \dots \times n}_{d \textup{ times}}$, namely $(\fbl_{\bi})_{\bi\in\indset{d}}$, to be given by
$$
\fbl_{\bi} := 
\begin{cases}
    1, & \text{if}\; (a-1)\blw+1 \leq \bi \leq a \blw,\\
    0, & \text{otherwise},
\end{cases}
$$
whenever $a \in [n/\blw]$, where we tacitly assume that $\blw \mid n$. 
Applying Theorem~\ref{thm:main-poly} to $\fbl$ yields the following; proof of which is delegated to Appendix~\ref{par:block-diagonal}. 

\begin{claim}\label{clm:block-diagonal}
Let $d \geq 2$ and let $n$ be sufficiently large.
Then,
\begin{align*}
\sup_{x \in \mathbb{R}} & \PROBp{\res_x^{\fbl}(\bXi) \leq r} = \\ 
& 
O_d(1) \log^{\frac{d}{2}}(n) 
\sqrt{\frac{\blw}{n}}
\left(\left(1+\frac{r}{\sqrt{\blw}}\right)^d
-1\right)
  + 
  O_d(1)\exp \left(-\Omega_d(1) \left( \sqrt{\frac{n}{\blw}}\right)^{ 1/{\Theta_d(1)}} \right)    .
\end{align*}
Then, for $\blw = o(n)$ and any $x\in \mathbb{R}$
\begin{equation}\label{eq:block-diagonal-tensor}
\res_x^{\fbl}(\bxi_1,\dots,\bxi_d) = 
\Omega\left(\sqrt{\frac{\blw}{\log n}}\left(\frac{n}{\blw}\right)^{\frac 1 {2d}}\right)
\end{equation}
holds a.a.s., whenever $\bxi_1,\dots,\bxi_d \in \{\pm 1\}^n$ are independent Rademacher vectors. 
\end{claim}


\section{Model result: Resilience of bilinear Rademacher chaos}\label{sec:quad-bi}

In this section, we prove Theorem~\ref{thm::main-bf} which is served as a {\bf model result} through which we seek to enhance the understanding of the proof of our main result, namely Theorem~\ref{thm:main-poly}. 

Given $\ell \in \mathbb{N}$ and a Rademacher vector $\bmu$, write $N_\ell(\bmu)$ to denote the family of Rademacher vectors $\bzeta$ satisfying $d_H(\bmu,\bzeta) \leq \ell$.  
Performing a single flip in $\bpsi$ can alter the value of $\bpsi^\trans M \bxi$ by at most $2\norm{M\bxi}_\infty$. Performing a single flip in $\bxi$ may alter the value of $\norm{M\bxi}_\infty$ by at most $2\norm{M}_\infty$.
More generally, performing $\ell \in \mathbb{N}$ flips in $\bpsi$ and $\ell' \in \mathbb{N}$ flips in $\bxi$, such that $\ell+\ell' \leq r$, may alter the value of $\bpsi ^\trans M \bxi$ by at most
\begin{align*}
\max_{\bpsi'\in N_{\ell}(\bpsi)}  \max_{\bxi'\in N_{\ell'}(\bxi)} |(\bpsi') ^\trans M \bxi' - \bpsi ^\trans M \bxi|.  
\end{align*}
Noting that
\begin{align}
\label{eq:tria_ineq}
\left|(\bpsi') ^\trans M \bxi' - \bpsi ^\trans M \bxi\right|  
&=
\left|(\bpsi') ^\trans M \bxi' - 
\bpsi^\trans M \bxi'
+ 
\bpsi^\trans M \bxi'
- \bpsi ^\trans M \bxi
\right|
\\ \nonumber
& =
\left|(\bpsi'- \bpsi) ^\trans M \bxi' 
+ 
\bpsi^\trans M (\bxi'-\bxi)
\right|
\\ \nonumber
& \leq
\left|(\bpsi'- \bpsi) ^\trans M \bxi'\right| 
+ 
\left|\bpsi^\trans M (\bxi'-\bxi)
\right| 
\end{align}
allows us to write 
\begin{align}
\nonumber
\max_{\bpsi'\in N_{\ell}(\bpsi)}  \max_{\bxi'\in N_{\ell'}(\bxi)} |(\bpsi') ^\trans M \bxi' - \bpsi ^\trans M \bxi| 
&\leq \max_{\bpsi'\in N_{\ell}(\bpsi)}  \max_{\bxi'\in N_{\ell'}(\bxi)} 
 \left\{|(\bpsi'-\bpsi) ^\trans M \bxi'|
 + |\bpsi^\trans M (\bxi' - \bxi)| 
\right\}
 \\ \nonumber
& \leq 2\ell \max_{\bxi'\in N_{\ell'}(\bxi)} \norm{M\bxi'}_\infty  + 2 \ell' 
\norm{\bpsi^\trans M}_\infty 
\\ \nonumber
&\leq
2\ell \max_{\bxi'\in N_{\ell'}(\bxi)} \norm{M(\bxi'-\bxi)}_\infty  
+ 2 \ell \norm{M\bxi}_\infty 
+ 2 \ell' \norm{\bpsi^\trans M}_\infty 
\\ \nonumber
&\leq
4\ell\min\{\ell',\|M\|_{\infty,0}\}\norm{M}_\infty  
+ 2 \ell \norm{M\bxi}_\infty 
+ 2 \ell' \norm{\bpsi^\trans M}_\infty 
\\[4pt]
&\leq
4r\min\{r,\|M\|_{\infty,0}\}\norm{M}_\infty  
+ 2 r \norm{M\bxi}_\infty 
+ 2 r \norm{\bpsi^\trans M}_\infty.
\label{eq:decomp}
\end{align}
From the point of view of the rows of $M$, the minimisation appearing above takes care of the case where each row of $M$ has less than $\ell'$ non-zeros; in this case $\ell'- \norm{M}_{\infty,0}$ of the flips will have no effect on $\norm{M\bxi}_\infty$. 
The same applies to the columns. 
Repeating the same argument as above but with adding and subtracting $(\bpsi')^\trans M \bxi$ instead of $\bpsi^\trans M \bxi'$ in \eqref{eq:tria_ineq} yields the same inequality as in~\eqref{eq:decomp} but with $M$ replaced by $M^\trans$. We may thus take minimum between these two inequalities. This minimum is upper bounded by the sum of the following two quantities.
\begin{align}
\eps_M(\bxi, r)
&:= 
2r \|M\bxi\|_\infty
+ 2r \|M\|_\infty \min\{r,\|M\|_{\infty,0}\},
\nonumber\\ 
& \label{eq:eps_xi_k}\\
\eps_{M^\trans}(\bpsi, r) &:=
2r\|M^\trans \bpsi\|_\infty + 2r \|M\|_\infty\min\{r,\|M^\trans\|_{\infty,0}\}. \nonumber
\end{align}
The following inclusion of events 
\begin{align*}
\left\{\res_x^M(\bpsi,\bxi) \leq r\right\}
&\subseteq
\left\{|\bpsi ^\trans M \bxi - x | \leq \eps_M(\bxi,r) + \eps_{M^\trans}(\bpsi,r) \right\}
\\&\subseteq
\left\{|\bpsi ^\trans M \bxi - x | \leq 2\eps_M(\bxi,r)\right\}
\cup
\left\{|\bpsi ^\trans M \bxi - x | \leq 2\eps_{M^\trans}(\bpsi,r)\right\}
\end{align*}
then holds. We may thus write that
\begin{align}
\sup_{x\in\R}\PROBp{ \res_x^M(\bpsi,\bxi) \leq r} 
& \leq \sup_{x\in\R}\PROBp{ |\bpsi ^\trans M \bxi - x | \leq 
\eps_M(\bxi,r) + \eps_{M^\trans}(\bpsi,r)
}
\nonumber \\
\label{eq:small_ball_est_bf-1}
&\leq
\sup_{x\in\R}\PROBp{ |\bpsi ^\trans M \bxi - x | \leq 2\eps_M(\bxi,r)}
+
\sup_{x\in\R}\PROBp{ |\bpsi ^\trans M \bxi - x | \leq 2\eps_{M^\trans}(\bpsi,r)}
.
\end{align}

Lemma~\ref{lem:small_ball_bilinear}, stated next, is used to bound the small-ball probabilities seen on the right hand side of~\eqref{eq:small_ball_est_bf-1}. It is formulated in a slightly more general form; this in anticipation of our needs arising the proof of Theorem~\ref{thm:qf} below. A random variable $X$ is said to have the {\em lazy Rademacher} distribution if it assumes its values in the set $\{-1,0,1\}$ with the probabilities 
$$
\PROB\{X = 0\} = 1/2\quad \text{and} \quad \PROB\{X = 1\} = 1/4 = \PROB\{X = -1\}.
$$ 
A random vector $\bxi \in \{-1,0,1\}^n$ is said to be a {\em lazy Rademacher} vector provided its entries are i.i.d. copies of a lazy Rademacher random variable. 

\begin{lemma}
\label{lem:small_ball_bilinear}
Let $\bpsi$ be a real $n$-dimensional random vector and let $\bxi$ be a real $m$-dimensional random vector independent of $\bpsi$ and such that both are either Rademacher or both are lazy Rademacher. 

If $\PROBp{\eps_M(\bxi,r)\geq \norm{M\bxi}_\infty}=1$, then there are constants $c,C>0$ such that
\begin{align}
\label{eq:small_ball_bilinear}
\sup_{x\in\R}\PROBp{ |\bpsi ^\trans M \bxi - x | \leq \eps_M(\bxi,r)} 
\leq
\frac{c\EXPp{\eps_M(\bxi,r)}}{\norm{M}_\frob}
+ \exp \big(-C\sr(M)\big).
\end{align}    
\end{lemma}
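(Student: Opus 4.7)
\textbf{Proof plan for Lemma \ref{lem:small_ball_bilinear}.}

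The plan is to reduce the bilinear anti-concentration problem to the linear one by conditioning on $\bxi$, then to control the (random) denominator that appears via a standard concentration inequality for quadratic forms. Concretely, I would first condition on $\bxi$, so that $\bpsi^\trans M \bxi$ becomes the linear Rademacher (resp. lazy Rademacher) sum $\bpsi^\trans \ba$ with $\ba := M\bxi$, driven solely by the randomness in $\bpsi$. The hypothesis $\eps_M(\bxi,k) \geq \norm{M\bxi}_\infty$ (holding a.s.) is precisely what is needed in order to invoke the Kolmogorov--Rogozin inequality (Theorem \ref{thm:KRI}) with $\eps_i := |a_i|/2$; a short case-check shows that $1-\levy_{a_i \psi_i}(|a_i|/2) \geq 1/2$ in both the Rademacher and lazy Rademacher regimes, so the inequality yields
$$
\sup_{x \in \R} \PROBp{|\bpsi^\trans M \bxi - x| \leq \eps_M(\bxi,k) \,\big|\, \bxi} \;\leq\; \frac{C\,\eps_M(\bxi,k)}{\norm{M\bxi}_2},
$$
the right-hand side being interpreted as $+\infty$ on the (small) event $\{M\bxi = 0\}$.

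Next, I would take $\EXP_\bxi$ on both sides after using the trivial interchange $\sup_x \EXP_\bxi[\,\cdot\,] \leq \EXP_\bxi[\sup_x(\cdot)]$ to move the sup inside the expectation, and then split the expectation according to a ``good event'' $\Omega := \{\norm{M\bxi}_2^2 \geq \norm{M}_\frob^2/2\}$:
$$
\sup_{x\in\R}\PROBp{|\bpsi^\trans M \bxi - x| \leq \eps_M(\bxi,k)}
\;\leq\;
\EXPp{\frac{C\,\eps_M(\bxi,k)}{\norm{M\bxi}_2}\chr{\Omega}}
\;+\; \PROBp{\Omega^c}.
$$
On $\Omega$, the ratio is bounded by $\sqrt{2}\,C\,\eps_M(\bxi,k)/\norm{M}_\frob$, producing the first term $c\,\EXPp{\eps_M(\bxi,k)}/\norm{M}_\frob$ in \eqref{eq:small_ball_bilinear}.

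It remains to bound $\PROBp{\Omega^c}$. Since $\norm{M\bxi}_2^2 = \bxi^\trans M^\trans M \bxi$ is a quadratic form in the sub-Gaussian vector $\bxi$ (with absolute sub-Gaussian constant, in both the Rademacher and lazy Rademacher cases), and $\EXPp{\norm{M\bxi}_2^2}$ is a positive constant times $\norm{M}_\frob^2$, the Hanson--Wright inequality gives
$$
\PROBp{\norm{M\bxi}_2^2 \leq \norm{M}_\frob^2/2} \;\leq\; \exp\!\left(-c\min\!\left\{\frac{\norm{M}_\frob^4}{\norm{M^\trans M}_\frob^2},\,\frac{\norm{M}_\frob^2}{\norm{M^\trans M}_2}\right\}\right) \;\leq\; \exp\!\bigl(-c'\,\sr(M)\bigr),
$$
using $\norm{M^\trans M}_\frob \leq \norm{M}_2\,\norm{M}_\frob$ and $\norm{M^\trans M}_2 = \norm{M}_2^2$. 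This produces the second term of \eqref{eq:small_ball_bilinear} and concludes the proof.

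The main technical obstacle is the degeneracy on $\{\norm{M\bxi}_2 \ll \norm{M}_\frob\}$, on which the Kolmogorov--Rogozin bound is useless; this is exactly what forces the stable-rank exponential term into the statement and is why the assumption $\sr(M) = \omega(1)$ is so natural. A minor, but notationally delicate, secondary obstacle is verifying that all ingredients (the Kolmogorov--Rogozin step and the Hanson--Wright step) go through uniformly for both Rademacher and lazy Rademacher entries; in each case the relevant constants $1 - \levy_{\psi_i}(|a_i|/2)$ and $\EXPp{\norm{M\bxi}_2^2}/\norm{M}_\frob^2$ are bounded below by absolute constants, which is the only fact needed.
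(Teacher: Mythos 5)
Your proposal follows essentially the same route as the paper's proof: condition on $\bxi$, apply the Kolmogorov--Rogozin inequality to the linear form $\bpsi^\trans (M\bxi)$ with $\eps_i=|(M\bxi)_i|/2$ (the hypothesis $\eps_M(\bxi,k)\geq\norm{M\bxi}_\infty$ ensuring $\eps_i<\eps$, and $1-\levy_{\psi_i(M\bxi)_i}(\eps_i)\geq 1/2$ in both the Rademacher and lazy Rademacher cases), split according to a good event on which $\norm{M\bxi}_2$ is a constant fraction of $\norm{M}_\frob$, and control the bad event via Hanson--Wright to produce the $\exp(-C\sr(M))$ term.

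One quantitative slip needs fixing: you take the good event to be $\{\norm{M\bxi}_2^2\geq\norm{M}_\frob^2/2\}$, but in the lazy Rademacher case the entries have variance $1/2$, so $\EXPp{\norm{M\bxi}_2^2}=\norm{M}_\frob^2/2$ exactly; then $\{\norm{M\bxi}_2^2<\norm{M}_\frob^2/2\}$ is not a deviation event, and the claimed bound $\exp\bigl(-c'\sr(M)\bigr)$ for its probability does not follow from Hanson--Wright (its probability can be of constant order). The remedy is to lower the threshold to a sufficiently small constant fraction of $\EXPp{\norm{M\bxi}_2^2}$ --- for instance $\{\norm{M\bxi}_2\geq\norm{M}_\frob/2\}$, i.e.\ $\norm{M\bxi}_2^2\geq\norm{M}_\frob^2/4$, which is what the paper uses --- after which Hanson--Wright applied to $\bxi^\trans M^\trans M\bxi$, together with your inequalities $\norm{M^\trans M}_\frob\leq\norm{M}_2\norm{M}_\frob$ and $\norm{M^\trans M}_2=\norm{M}_2^2$, does give $\exp\bigl(-c\,\sr(M)\bigr)$ in both cases; this only changes the constants $c,C$ in \eqref{eq:small_ball_bilinear}. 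With that adjustment your argument is correct and coincides with the paper's proof.
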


Postponing the proof of Lemma~\ref{lem:small_ball_bilinear} until the end of this section, we proceed to deducing Theorem~\ref{thm::main-bf} from it. To see this, note that in the context of the latter, $\bxi$ appearing on the right hand side of \eqref{eq:small_ball_bilinear} is Rademacher (and not lazy). A trivial upper bound over the right hand side of \eqref{eq:small_ball_bilinear}, can be obtained through the inequalities $\norm{M\bxi}_\infty \leq \norm{M}_{\infty,0} \norm{M}_\infty$  and $\norm{\bpsi^\trans M}_\infty \leq \norm{M^\trans}_{\infty,0} \norm{M}_\infty$ applied to the terms $\|M\bxi\|_\infty$ and $\|\bpsi^\trans M\|_\infty$, respectively, appearing in~\eqref{eq:eps_xi_k}; indeed, with these estimates one may proceed to bound the expectations
of the quantities seen in~\eqref{eq:eps_xi_k}
by 
$$
\EXPp{\eps_M(\bxi,r)}\leq cr
\norm{M}_\infty
\norm{M}_{\infty,0}
\quad\text{and}\quad
\EXPp{\eps_{M^\trans}(\bpsi,r)}\leq cr
\norm{M}_\infty
\norm{M^{\trans}}_{\infty,0};
$$ 

Alas, $\|M\|_{\infty,0}$ and $\|M^\trans\|_{\infty,0}$ can be quite large and so a new idea is needed. Starting with the rows of $M$, a crucial observation here allowing us to improve on the above trivial upper bound is that regardless of whether $\bxi$ is Rademacher or lazy Rademacher, each random variables $X_{\bt}$ of the random process 
\begin{equation}\label{eq::gauss-proc}
\bX:=(X_{\bt}:=|\bt^\trans \bxi|)_{\bt \in \mathrm{Rows}(M)}
\end{equation}
is sub-gaussian\footnote{We follow~\cite[Definition~3.5]{van2014probability}. } with parameter $c\|\bt\|^2_2$, for some constant $c>0$, and thus sub-gaussian with parameter $c\|M\|^2_{\infty,2}$, for some constant $c>0$, as well; this, by Hoeffding's inequality (see e.g.~\cite[Theorem~2.2.5]{Vershynin}). All of the above applies to the columns of $M$ essentially verbatim. In that, we consider the random process 
$$
\bZ:=(Z_{\bt}:=|\bpsi^\trans \bt|)_{\bt \in \mathrm{Cols}(M)},
$$
where $\mathrm{Cols}(M)$ denotes the set of columns of $M$. This process is sub-gaussian with parameter $c\|M^\trans\|_{\infty,2}^2$ for some constant $c>0$. Consequently, each of the quantities 
$$
\EXPp{\sup_{\bt \in \mathrm{Rows}(M)}X_{\bt}} = \EXPp{\|M\bxi\|_\infty} \quad \text{and} \quad \EXPp{\sup_{\bt \in \mathrm{Cols}(M)}Z_{\bt}} = \EXPp{\|\bpsi^\trans M\|_\infty},
$$ 
which coincide with the so called {\em Rademacher complexity} of the sets $\mathrm{Rows}(M)$ and $\mathrm{Cols}(M)$, respectively, can each be estimated using the following special case of {\em Dudley's maximal inequality}.

\begin{theorem}\label{thm::dudley-max}{\em (Dudley's maximal inequality - abridged~\cite[Lemma~5.1]{van2014probability})}\\
Let $T \subseteq \R^n$ be finite and let $(Y_{\bt})_{\bt \in T}$ be a random process such that $Y_{\bt}$ is sub-gaussian with parameter $K^2$ for every $\bt \in T$. Then, 
$$
\EXPp{ \sup_{\bt \in T} Y_{\bt} } \leq K \sqrt{2 \log |T|}.
$$
\end{theorem}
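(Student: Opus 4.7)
The plan is to use the classical Chernoff-style argument combined with a union bound in moment-generating-function form, which is the canonical route to maximal inequalities under sub-gaussianity. The hypothesis gives us the MGF bound $\EXP[e^{\lambda Y_{\bt}}] \leq e^{\lambda^2 K^2 / 2}$ for every $\bt \in T$ and every $\lambda \in \R$ (as per the definition of sub-gaussianity cited in the paper), and essentially nothing else is needed.

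The first step is to exponentiate the supremum at a dual parameter $\lambda > 0$ and invoke Jensen's inequality:
\[
\exp\bigl(\lambda\, \EXPp{\sup_{\bt \in T} Y_{\bt}}\bigr) \;\leq\; \EXPp{\exp\bigl(\lambda \sup_{\bt \in T} Y_{\bt}\bigr)} \;=\; \EXPp{\max_{\bt \in T} e^{\lambda Y_{\bt}}}.
\]
Since $T$ is finite and the summands are nonnegative, the maximum is dominated by the sum, and so the right hand side is at most $\sum_{\bt \in T} \EXP[e^{\lambda Y_{\bt}}] \leq |T|\, e^{\lambda^2 K^2 / 2}$. This is the single place where finiteness of $T$ enters: it lets the union bound collapse to a clean factor of $|T|$. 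Taking logarithms and dividing through by $\lambda$ yields
\[
\EXPp{\sup_{\bt \in T} Y_{\bt}} \;\leq\; \frac{\log|T|}{\lambda} + \frac{\lambda K^2}{2}.
\]

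The final step is an elementary minimisation over $\lambda > 0$. Either by calculus or by AM--GM, the optimiser is $\lambda^\star = \sqrt{2\log|T|}/K$, and substituting it back delivers the stated bound $K\sqrt{2\log|T|}$. There is really no obstacle here: the sub-gaussian hypothesis is tailored precisely for this Chernoff-style argument, and the finiteness of $T$ makes the exponential union bound trivial. The only edge case worth a sentence is $|T| = 1$, where the claim follows directly from the fact that a centred sub-gaussian variable has mean $\leq 0$ (a consequence of Jensen applied to the MGF bound).
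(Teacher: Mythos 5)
Your argument is correct and is precisely the standard Chernoff--Jensen--union-bound proof of this maximal inequality; the paper itself does not prove the statement but cites it from van Handel's notes, where the proof of Lemma~5.1 is essentially the same argument you give (including the optimisation $\lambda^\star = \sqrt{2\log|T|}/K$). No gaps; the $|T|=1$ remark is fine and in fact already covered by letting $\lambda \to 0^+$ in your general bound.
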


For a Rademacher $\bxi$, as per the case of Theorem~\ref{thm::main-bf}, we may then write that 
\begin{equation}\label{eq:logn-appears}
\EXP\left\{\norm{M\bxi}_\infty\right\}\leq
c\sqrt{\log n}\norm{M}_{\infty,2}
\end{equation}
holds for some constant $c>0$. This in turn yields 
\begin{align}
\EXPp{\eps_M(\bxi,r)} & \leq    
c'_1 r\sqrt{\log n} \norm{M}_{\infty,2}
+c'_2 r \min\{r,\norm{M}_{\infty,0}\}  \norm{M}_\infty \nonumber \\
&\label{eq:exp-radius-bound} \\
\EXPp{\eps_{M^\trans}(\bpsi,r)}  &\leq c'_1 r\sqrt{\log n} \norm{M^\trans}_{\infty,2}
+c'_2 r \min\{r,\norm{M^\trans}_{\infty,0}\}  \norm{M}_\infty \nonumber
\end{align}
for some constants $c'_1,c'_2 >0$. Theorem~\ref{thm::dudley-max}, inequality~\eqref{eq:small_ball_est_bf-1}, and the facts that $\|M\|_\frob = \|M^\trans\|_\frob$ as well as $\sr(M) = \sr(M^\trans)$ allow for 
\begin{equation}\label{eq:sum-up}
\sup_{x \in \mathbb{R}}\PROBp{\res_x^M(\bpsi,\bxi) \leq r} \leq \\
\frac{c\big(\Ex \left\{\eps_M(\bxi,r)  \right\} + \Ex \left\{\eps_M(\bpsi,r) \right\}\big)}{\|M\|_\frob}  + \exp\big(-C\sr(M)\big)
\end{equation}
for some constants $c,C >0$. Owing to~\eqref{eq:exp-radius-bound}, we may write
\begin{align*}
\Ex \left\{\eps_M(\bxi,r)  \right\} & + \Ex \left\{\eps_M(\bpsi,r) \right\} \leq \\
&c'_1 r \sqrt{\log n} \max\{\|M\|_{\infty,2},\|M^\trans\|_{\infty,2}\}+ c'_2r \|M\|_\infty \min \{r,\max\{\|M\|_{\infty,0},\|M^\trans\|_{\infty,0}\}\}.
\end{align*}
This last bound together with~\eqref{eq:sum-up} 
establish Theorem~\ref{thm::main-bf}.

\medskip

It remains to prove Lemma~\ref{lem:small_ball_bilinear}. Facilitating our proof of the latter is the so called {\em Kolmogorov-Rogozin inequality} stated next. For a real random variable $X$ and $\eps >0$, the {\em L\'evy concentration function} is given by 
\begin{equation}\label{eq:Levy}
\levy_X(\eps) := \sup_{x \in \R} \PROB \left\{|X-x| \leq \eps \right\}.
\end{equation}

\begin{theorem}\label{thm:KRI}{\em (Kolmogorov-Rogozin inequality~\cite{kolmogorov1958proprietes,rogozin1961increase,rogozin1961estimate})}
Let $X_1,\ldots,X_n$ be independent random variables and set $S_n := \sum_{i=1}^n X_n$. Then, there exists a constant $C>0$ such that for any $0 \leq \eps_1,\ldots,\eps_n < \eps$,
$$
    \levy_{S_n}(\eps) \leq \frac{C\eps}{\sqrt{\sum_{i=1}^n \eps_i^2(1 - \levy_{X_i}(\eps_i))}}
$$
holds. 
\end{theorem}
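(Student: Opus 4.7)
The plan is to combine two classical ingredients. First, Esseen's concentration function inequality bounds $\levy_{S_n}(\eps)$ by an integral of the characteristic function $|\phi_{S_n}(t)|$ over $|t|\leq 1/\eps$ (here $\phi_Z$ denotes the characteristic function of a random variable $Z$). Second, a pointwise lower bound on $1-|\phi_{X_i}(t)|^2$ in terms of $\levy_{X_i}(\eps_i)$ allows each factor in the product $|\phi_{S_n}(t)| = \prod_{i=1}^n |\phi_{X_i}(t)|$ to be traded for Gaussian-type decay in $t$, whose tail integral matches the stated bound. This is essentially the classical route of Esseen, Kolmogorov, and Rogozin.

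I would first invoke Esseen's inequality $\levy_{S_n}(\eps) \leq C_1 \eps \int_{-1/\eps}^{1/\eps} |\phi_{S_n}(t)|\,dt$ and apply the elementary estimate $\sqrt{a} \leq \exp(-(1-a)/2)$ for $a \in [0,1]$ to each $a = |\phi_{X_i}(t)|^2$, obtaining $|\phi_{S_n}(t)| \leq \exp\bigl(-\tfrac12 \sum_{i=1}^n (1-|\phi_{X_i}(t)|^2)\bigr)$. Symmetrising via an independent copy $X_i'$ of $X_i$ with $\tilde X_i := X_i - X_i'$, the quantity $|\phi_{X_i}(t)|^2 = \EXPp{\cos(t\tilde X_i)}$ is real and nonnegative, so $1-|\phi_{X_i}(t)|^2 = 2\,\EXPp{\sin^2(t\tilde X_i/2)}$.

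The technical core of the argument is the pointwise lemma: for every $|t|\leq 1/\eps_i$,
$$
1-|\phi_{X_i}(t)|^2 \;\geq\; c_2\, t^2 \eps_i^2 \, \big(1-\levy_{X_i}(\eps_i)\big),
$$
with an absolute constant $c_2>0$. To prove it, I would use the trigonometric bound $\sin^2(y/2) \geq (2/\pi)^2(y/2)^2$ valid on $|y|\leq \pi$, restricted to a ``good'' event in which $|\tilde X_i|$ lies in a window on which $t\tilde X_i$ avoids the periodic nulls of $\sin^2$. The mass of that good event can be controlled by the covering inequality $\PROBp{|\tilde X_i|\leq 2\eps_i} \leq 2\,\levy_{X_i}(\eps_i)$, obtained by conditioning on $X_i'$ and covering the ball of radius $2\eps_i$ around $X_i'$ by two balls of radius $\eps_i$; the complementary probability then supplies the factor $1-\levy_{X_i}(\eps_i)$ in the desired lower bound.

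Finally, using $\eps_i < \eps$ the constraint $|t|\leq 1/\eps$ ensures $|t|\leq 1/\eps_i$, so the pointwise lemma applies uniformly across the range of integration. Setting $V := \sum_{i=1}^n \eps_i^2\,(1-\levy_{X_i}(\eps_i))$, summing the lemma and exponentiating yields $|\phi_{S_n}(t)| \leq \exp(-c_2 V t^2/2)$. Plugging back into Esseen's bound and enlarging the integration range to all of $\R$, the remaining Gaussian integral evaluates to $\sqrt{2\pi/(c_2 V)}$, delivering $\levy_{S_n}(\eps) \leq C \eps/\sqrt{V}$, as claimed. The principal obstacle is the pointwise lemma: because $\sin^2$ is bounded and periodic, a naive variance-style argument via $\EXPp{\tilde X_i^2}$ is unavailable, and the delicate balance between the ``good'' window supporting the quadratic behaviour of $\sin^2$ near $0$ and the covering estimate bounding its probability is where the quantitative bookkeeping must be done with care.
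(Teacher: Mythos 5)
The paper never proves this theorem: it is imported as a classical black box with citations to Kolmogorov and Rogozin, so your proposal has to stand on its own. Its skeleton (Esseen's concentration-function inequality, the bound $\sqrt{a}\le e^{-(1-a)/2}$, symmetrisation) is indeed the classical Esseen-type route, but the step you yourself call the technical core is false as stated. The claimed pointwise bound $1-|\phi_{X_i}(t)|^2\ \ge\ c_2\,t^2\eps_i^2\,(1-\levy_{X_i}(\eps_i))$ for all $|t|\le 1/\eps_i$ fails: take $X_i$ uniform on $\{\pm N\}$ with $N$ large, $\eps_i=1$, and $t=\pi/N\le 1/\eps_i$. Then $|\phi_{X_i}(t)|^2=\cos^2(Nt)$, so the left-hand side is $\sin^2(Nt)=0$, while the right-hand side is $c_2\pi^2/(2N^2)>0$ since $\levy_{X_i}(1)=1/2$. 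The same example defeats the consequence you draw from the lemma, namely $|\phi_{S_n}(t)|\le\exp(-c_2Vt^2/2)$ on all of $|t|\le 1/\eps$ (at $t=2\pi/N$ the factor $|\phi_{X_i}(t)|$ equals $1$). The reason the ``good window'' device cannot rescue the lemma is structural: all of the mass of $\widetilde X_i$ away from the origin may sit far outside $[-\pi/|t|,\pi/|t|]$ and land exactly on the nulls of $\sin^2(t\,\cdot/2)$, so the event on which the quadratic behaviour of $\sin^2$ is available can have probability $0$ even though $1-\levy_{X_i}(\eps_i)$ is of constant order. What is true pointwise is only $1-|\phi_{X_i}(t)|^2\ge c\,t^2\int_{|x|\le \pi/|t|}x^2\,dW_i(x)$, with $W_i$ the law of $\widetilde X_i$, and that truncated second moment is not bounded below by $\eps_i^2(1-\levy_{X_i}(\eps_i))$ in general.

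The known proofs avoid any such pointwise claim: Rogozin's original argument is combinatorial (Sperner-type), and in Esseen's characteristic-function proof the decay of $\int_{|t|\le 1/\eps}\exp\bigl(-\tfrac12\sum_i(1-|\phi_{X_i}(t)|^2)\bigr)\,dt$ is extracted only after integrating in $t$, using that the set of $t$ at which a factor $|\phi_{X_i}(t)|$ returns near $1$ (your periodic nulls) has small measure; the Gaussian-type majorant you posit simply does not hold for each fixed $t$. A smaller, fixable slip: from $\PROBp{|\widetilde X_i|\le 2\eps_i}\le 2\levy_{X_i}(\eps_i)$ the complement yields the factor $1-2\levy_{X_i}(\eps_i)$, not $1-\levy_{X_i}(\eps_i)$; this is vacuous whenever $\levy_{X_i}(\eps_i)\ge 1/2$, which is exactly the Rademacher situation the paper feeds into the theorem. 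Conditioning on $X_i'$ and using a single interval of radius $\eps_i$ gives the correct $\PROBp{|\widetilde X_i|>\eps_i}\ge 1-\levy_{X_i}(\eps_i)$. Since the paper only cites the theorem, no proof is required there; if you want a self-contained argument, follow Esseen's proof (as presented, e.g., in Petrov's book), where the $t$-integration rather than a pointwise estimate carries the argument.
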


It remains to prove Lemma~\ref{lem:small_ball_bilinear}.

\bigskip
\noindent
{\bf Proof of Lemma \ref{lem:small_ball_bilinear}.}
Start by writing
\begin{align}
\sup_{x\in\R}&\PROBp{ |\bpsi ^\trans M \bxi - x | \leq \eps_M(\bxi,r)} 
\leq \nonumber \\
& \EXP_{\bxi}\left[\sup_{x\in\R}\PROBp{ |\bpsi ^\trans M \bxi - x | \leq \eps_M(\bxi,r)
\,\, \wedge \,\, 
\norm{M\bxi}_2 \geq \frac{1}{2} \norm{M}_\frob
\,\middle\vert\,\bxi}\right]
+ \PROBp{\norm{M\bxi}_2 < \frac{1}{2}\norm{M}_\frob}. \label{eq:conditioning-1}   
\end{align}
By the Hanson-Wright inequality~\cite[Theorem 2.1]{rudelson2013hanson}, there exists a constant $C>0$ such that 
\begin{equation}\label{eq::below-frob}
\PROBp{\|M\bxi\|_2 < \frac{1}{2}\norm{M}_\frob} \leq \exp{\left(-C\sr(M)\right)}.
\end{equation}
To bound the term appearing within the expectation appearing on the right hand side of~\eqref{eq:conditioning-1}, we appeal to  Theorem~\ref{thm:KRI}. To that end, fix $\bxi$ satisfying $\norm{M\bxi}_2 \geq \frac{1}{2} \norm{M}_\frob >0$ (the last inequality is owing to $M$ being non-zero) and set (per the terminology of Theorem~\ref{thm:KRI}), 
\begin{align*}
 X_i = \bpsi_i(M\bxi)_i, 
  \qquad 
 \eps =  \eps_M(\bxi,k),
 \qquad \eps_i = |(M \bxi)_i|/2,
\end{align*}
where $\eps_M(\bxi,k)$ is as in \eqref{eq:eps_xi_k} and note that $\bpsi^\trans M \bxi  = \sum_{i=1}^n X_i$. Owing to $\norm{M\bxi}_2 >0$, at least one $\eps_i$ is non-zero. For each such $i$, if $\bxi$ is Rademacher, then all the mass of $X_i$ is supported on two values, i.e. $\pm |(M\bxi)_i|$; if $\bxi$ is a lazy Rademacher vector then all the mass of $X_i$ is supported on the set $\{-|(M\bxi)_i,0,|(M\bxi)_i|\}$. In either case, $\levy_{X_i}(\eps_i) = 1/2$. 
We may then write 
\begin{align*}
\sum_{i=1}^n \eps_i^2(1 - \levy_{\eps_i}(X_i)) 
=
\frac{1}{8}\sum_{i=1}^n (M\bxi)_i^2 = \frac{1}{8} \norm{M\bxi}_2^2.
\end{align*}
Theorem \ref{thm:KRI} then asserts that 
\begin{align*}
\sup_{x\in\R}\PROBp{ |\bpsi ^\trans M \bxi - x | 
\leq \eps_M(\bxi,r)
\,\, \wedge \,\, 
\norm{M\bxi}_2 \geq \frac{1}{2}\norm{M}_\frob
\,\,\middle\vert\,\, \bxi} 
&\leq 
\frac{4\eps_M(\bxi,r)}{\norm{M\bxi}_2}\cdot\chr{\norm{M\bxi}_2 \geq \frac{1}{2}\norm{M}_\frob}
\\&\leq 
\frac{8\eps_M(\bxi,r)}{\norm{M}_\frob}.
\end{align*}
Substituting this bound as well as~\eqref{eq::below-frob} into \eqref{eq:conditioning-1}, one obtains
\begin{align*}
\sup_{x\in\R}\PROBp{ |\bpsi ^\trans M \bxi - x | \leq \eps_M(\bxi,r)}
& \leq \frac{c\EXP_{\bxi}\left\{\eps_M(\bxi,r)\right\}}{\norm{M}_\frob}
+ \exp{\left(-C\sr(M)\right)},
\end{align*}
concluding the proof of Lemma \ref{lem:small_ball_bilinear}. \endProof

\section{Resilience of Rademacher chaos of arbitrary degree}\label{sec:main-poly}

In this section, we prove Theorem~\ref{thm:main-poly}. In fact, we prove the following more accurate version of it. 

\begin{theorem}\label{thm:main-poly-2}
Let $2 \leq d \in \mathbb{N}$ be given an let $(f_{\bi})_{\bi \in \mathcal{T}}$ be as in~\eqref{eq:poly-def} such that $f \not\equiv 0$ and satisfying 
$n_i= n = \omega(d^d)$ for every $i \in [d]$. Let $\bxi_1,\ldots,\bxi_d$ be independent Rademacher vectors conformal with the dimensions of $f$. Then,  
{\small
\begin{align*}
&\sup_{x\in\R}\PROBp{ \res_x^f(\bXi) \leq \ress}
\leq\\
&
\sum_{I \subseteq [d]: I\neq\emptyset}
\frac{O_d((2\ress)^{|I|})
    \cdot
     \log^{\frac{d}{2}}\left(\Pi_{i\in I} n\right)
     \cdot\max_{d_I \in \mathcal{T}_I}\norm{f_{I,d_I}}_\frob}{ \|f\|_\frob}
   + \sum_{I \subseteq [d]: I\neq\emptyset} \min_{i \in [I]} \exp \left(-\Omega_d(1) \min_{\substack{k \in [2(d-1)]  \\ k\; \text{even}}} \left( \frac{ \|f\|^2_\frob}{\GameNorm}\right)^{1/\Theta_d(1)} \right)
\end{align*}
}
holds whenever $\ress \in [n]$. 
\end{theorem}

\noindent
Proof of Theorem~\ref{thm:main-poly-2} can be found in Section~\ref{sec:proof-main-poly}; prior to this, we collect additional notation and results facilitating our proof of this result in Section~\ref{sec:pre-poly}. 

\subsection{Preliminaries}\label{sec:pre-poly}

\subsubsection{Vectors of chaos restrictions}\label{sec:tensor-restrictions}
Given $f$ per the premise of Theorem~\ref{thm:main-poly-2}, we are reminded of the selection function, namely $\delta_{I,d_I}$, set in~\eqref{eq:selection} and the notion of chaos restrictions $f_{I,d_I}$ set in~\eqref{eq:restriction}. Examples facilitating the  understanding of these two notions are provided next.

\begin{example}
{\em Consider the degree~$3$ chaos given by  
$$
g(\bxi_1, \bxi_2,\bxi_3)= \sum_{i \in [n_1]} \sum_{j \in [n_2]} \sum_{k \in [n_3]} g_{ijk} (\bxi_1)_i (\bxi_2)_j(\bxi_3)_k.
$$ 
Pick, say, $I = \{2,3\}$ and let $d_I := (d_2 = j' \in [n_2], d_3 = k' \in [n_3])$. Then, 
\begin{align*}
g_{I,d_{I}}(\bxi_1)  = g(\bxi_1, \be_{d_2}, \be_{d_3}) & = \sum_{i \in [n_1]} \sum_{j \in [n_2]} \sum_{k \in [n_3]} g_{ijk} (\bxi_1)_i (\be_{d_2})_j (\be_{d_3})_k\\
& = \sum_{i \in [n_1]} \sum_{j \in [n_2]} \sum_{k \in [n_3]} g_{ijk} (\bxi_1)_i (\be_{j'})_j (\be_{k'})_k\\
& = \sum_{i \in [n_1]} f_{ij'k'} (\bxi_1)_i.
\end{align*}
\hfill$\blackdiamond$
}
\end{example}

\begin{example}
{\em Individual coefficients of $(f_{\bi})_{\bi \in \mathcal{T}}$ are isolated through 
$
f(\be_{i_1},\ldots,\be_{i_d}) = f_{i_1i_2\cdots i_d}.
$}
\hfill$\blackdiamond$    
\end{example}

Given a nonempty set of dimensions $I \subseteq [d]$, the maximum magnitude of all restrictions possible along these dimensions across all directions is given by 
\begin{equation}\label{eq:res-inf-norm}
\|f_{I}(\bXi_{\quot{I}})\|_{\infty} := \max_{d_I \in \mathcal{T}_I} |f_{I,d_I}(\bXi_{\quot{I}}))|.
\end{equation}
Unlike the norms $\|f_{I,d_I}\|_\frob$ and $\|f_{I,d_I}\|_\infty$, the term $\|f_{I}(\bXi_{\quot{I}})\|_{\infty}$ is a random variable.

Given a dimension $i \in [d]$, define the (random) vector
$$
\bv_{f,i} (\bXi_{\quot{i}}):= \big(f_{\{i\},\{k\}}(\bXi_{\quot{i}}): k \in [n_i] \big)^\trans \in \mathbb{R}^{n_i}
$$
to be a vector of chaos restrictions, 
where we recall that 
$$
f_{\{i\},\{k\}}(\bXi_{\quot{i}}) = f(\bxi_1,\ldots,\bxi_{i-1},\be_k,\bxi_{i+1}, \ldots, \bxi_d).
$$
Aiding the reader to locate $\bv_{f,i} (\bXi_{\quot{i}})$ in the proof of Theorem~\ref{thm::main-bf}, note that the counterparts of $\bv_{f,i}(\bXi_{\quot{i}})$ in that proof are $M\bxi$ and $\bpsi^\trans M$. Recalling the matrix $\mathcal{A}_i$ defined in~\eqref{eq:order-coeff}, note that 
\begin{equation}\label{eq:op-to-vec}
\mathcal{A}_i \big( \mathrm{vec} (\bXi_{\quot{i}}) \big) = \bv_{f,i}(\bXi_{\quot{i}})
\end{equation}
holds provided, of course, that the vectorisation of the (decoupled Rademacher) tensor $\bXi_{\quot{i}}$ conforms with the ordering of the coefficients of $f$ throughout $\mathcal{A}_i$ as defined in~\eqref{eq:order-coeff}; an ordering we assume is upheld.

\subsubsection{Concentration of Rademacher tensors} \label{sec:tensor-concentrate}
In the proof of Theorem~\ref{thm::main-bf}, utilization of the Hanson-Wright inequality~\cite[Theorem 2.1]{rudelson2013hanson} can be seen in~\eqref{eq::below-frob}. In the context of the proof Theorem~\ref{thm:main-poly-2}, the counterpart of~\eqref{eq::below-frob} is Lemma~\ref{lem:v-concentration} stated next; the lemma asserts that for every $i \in [d]$, the random variable $\|\bv_{i,f}(\bXi_{\quot{i}})\|_2$ exhibits an adequate level of concentration around $\|f\|_\frob$.

\begin{lemma}\label{lem:v-concentration}
For every $i \in [d]$, 
$$
\PROBp{\|\bv_{i,f}(\bXi_{\quot{i}})\|_2 \leq \frac{\|f\|_\frob}{2}} \leq \exp \left(-\Omega_d(1) \min_{\substack{ k \in [2(d-1)]  \\ k\; \text{even}}} \left( \frac{ \|f\|^2_\frob}{\GameNorm}\right)^{1/\Theta_d(1)} \right).
$$
\end{lemma}

To prove Lemma~\ref{lem:v-concentration}, we employ a far reaching generalisation of the Hanson-Wright inequality put forth by Adamczak and Wolff~\cite[Theorem~1.4]{AW15}. Statement of the latter requires preparation. Given a degree $D\geq 1$ polynomial $g :\R^n \to \R$ and $m \in [D]$, write $\nabla^m g$ to denote the tensor of $m$th-derivatives given by 
$$
(\nabla^m g)_{i_1\cdots i_d} = \frac{\partial g}{\partial x_{i_1} \cdots \partial x_{i_m}}.
$$
Derivation being insensitive to the order of the derivation sequence $\partial x_{i_1} \cdots \partial x_{i_m}$ means that the tensor $\nabla^mg$ retains $m!$ copies of each $m$th-derivative. 

The following is an abridged and significantly weaker formulation of~\cite[Theorem~1.4]{AW15} fitted for Rademacher vectors and to our needs. 

\begin{theorem}\label{thm:Adamcask} 
Let $g :\R^n \to \R$ be a polynomial of degree $D \geq 1$ and let $\bxi \in \{\pm 1\}^n$ be a Rademacher vector. Then, 
\begin{equation}\label{eq:Adamcask}
\PR \left\{|g(\bxi) - \Ex g(\bxi)| \geq t\right\} \leq 2 \exp \left(-\Omega_D(1) \min_{m \in [D]}  \left(\frac{t}{\|\Ex \nabla^m g(\bxi)\|_\frob} \right)^{1/\Theta_D(1)} \right)
\end{equation}
holds every $t > 0$. 
\end{theorem}

We are now in position to prove Lemma~\ref{lem:v-concentration}. 

\medskip
\noindent
{\bf Proof of Lemma~\ref{lem:v-concentration}.} It suffices to prove that 
\begin{equation}\label{eq:v-tail}
\PROBp{\|\bv_{i,f}(\bXi_{\quot{i}})\|_2^2 \leq \frac{\|f\|^2_{\frob}}{4}} \leq \exp \left(-\Omega_d(1) \min_{\substack{k \in  [2(d-1)]  \\ k\; \text{even}}} \left( \frac{ \|f\|^2_\frob}{\GameNorm}\right)^{1/\Theta_d(1)} \right). 
\end{equation}
Write
\begin{equation}\label{eq:v-poly}
\|\bv_{i,f}(\bXi_{\quot{i}})\|_2^2  = \mathrm{vec}(\bXi_{\quot{i}})^\trans \mathcal{A}_i^\trans \mathcal{A}_i\mathrm{vec}(\bXi_{\quot{i}}) = \sum_{\bj \in \indset{d-1}} \sum_{\bk \in \indset{d-1}} (\mathcal{A}_i^\trans\mathcal{A}_i)_{\bj \bk} (\bXi_{\quot{i}})_{\bj} (\bXi_{\quot{i}})_{\bk};
\end{equation}
aim is then to apply Theorem~\ref{thm:Adamcask} to the Rademacher polynomial~\eqref{eq:v-poly} in order to establish~\eqref{eq:v-tail}

\bigskip
\noindent
{\tt Expectation of~\eqref{eq:v-poly}.} To ascertain the expectation of the Rademacher polynomial~\eqref{eq:v-poly}, write the latter as follows
\begin{equation}\label{eq:v-decomposition}
\sum_{\substack{\bj,\bk \in \indset{d-1} \\ \bj \neq \bk}}  (\mathcal{A}_i^\trans\mathcal{A}_i)_{\bj \bk} (\bXi_{\quot{i}})_{\bj} (\bXi_{\quot{i}})_{\bk} + \sum_{\bj \in \indset{d-1}} (\mathcal{A}_i^\trans\mathcal{A}_i)_{\bj \bj}
\end{equation}
and observe the identities 
$$
\sum_{\bj \in \indset{d-1}} (\mathcal{A}_i^\trans\mathcal{A}_i)_{\bj \bj} = \|f\|^2_\frob \quad \text{as well as}\quad \Ex \left\{ \sum_{\substack{\bj,\bk \in \indset{d-1} \\ \bj \neq \bk}}  (\mathcal{A}_i^\trans\mathcal{A}_i)_{\bj \bk} (\bXi_{\quot{i}})_{\bj} (\bXi_{\quot{i}})_{\bk}\right\} = 0.
$$
We may thus write 
$$
\Ex \left\{\sum_{\bj \in \indset{d-1}} \sum_{\bk \in \indset{d-1}} (\mathcal{A}_i^\trans\mathcal{A}_i)_{\bj \bk} (\bXi_{\quot{i}})_{\bj} (\bXi_{\quot{i}})_{\bk} \right\} = \|f\|^2_{\frob}.
$$

\bigskip
\noindent
{\tt Partial derivatives of~\eqref{eq:v-poly}.} In order to simplify notation, assume,  without loss of generality, that $i=d$ and write $\mathcal A:=\mathcal A_d$; analysis for $i\neq d$ follows by symmetry. For the real vector variables $\bx_1,\dots,\bx_{d-1}\in\R^{n}$ and a tuple $\bell=\big((b_1,c_1),\dots,(b_m,c_m)\big)\in([d-1]\times[n])^m$, define the real monomial 
$$
\bx[\bell] = 
(\bx_{b_1})_{c_1} \cdots (\bx_{b_m})_{c_m}.
$$
This notation, we also use for Rademacher monomials, written $\bxi[\bell]$, where here real variables of the form $(\bx_b)_c$ are replaced with corresponding entries of the Rademacher vectors $\bxi_1,\dots,\bxi_{d-1}\in\{\pm1\}^{n}$.

Let $\bij:[n]^{d-1} \to [d-1]\times[n]^{d-1}$ be such that for any $\bj\in[n]^{d-1}$, $$\bij(\bj):=\left((1,\bj_1),\dots,(d-1,\bj_{d-1})\right).$$
The latter allows us to consider the real polynomial corresponding to \eqref{eq:v-poly},
\begin{align*}
g(\bx_1,\dots,\bx_{d-1})
: =
\sum_{\bj,\bk \in \indset{d-1}}
(\mathcal{A}^\trans\mathcal{A})_{\bj \bk} \cdot
\bx[\bij(\bj)]\cdot \bx[\bij(\bk)].
\end{align*}

Aim is to determine the $m$th-derivatives of $g$,  
$$
\bm\nabla^m g(\bx_1,\dots,\bx_{d-1})
\in \underbrace{\R^{(d-1)\times n}\times\dots\times \R^{(d-1)\times n}}_{m\text{ times}},
$$
whenever $m \in [2(d-1)]$. In that, $g$ is viewed as a of degree $2(d-1)$ polynomial in the $(d-1)n$ variables $(\bx_1)_1,
\dots,(\bx_{d-1})_{d-1}$.
For $\bell=\big((b_1,c_1),\dots,(b_m,c_m)\big)\in([d-1]\times[n])^m$, the $\bell$th entry of $\bm\nabla^m g$ is denoted as
\begin{align*}
\bm\nabla^m_{\bell} g(\bx_1,\dots,\bx_{d-1}) := (\bm\nabla^m g(\bx_1,\dots,\bx_{d-1}))_{\bell}
= 
\frac{\partial^m g(\bx_1,\dots,\bx_{d-1}) } {\partial (\bx_{b_1})_{c_1}\cdots\partial (\bx_{b_m})_{c_m}}.    
\end{align*}
Linearity of differentiation then allows for
\begin{align}
\label{eq:diff_real_poly}
\bm\nabla^m g(\bx_1,\dots,\bx_{d-1})
=
\sum_{\bj,\bk \in \indset{d-1}} 
(\mathcal{A}^\trans\mathcal{A})_{\bj \bk} \cdot
\bm\nabla^m (
\bx[\bij(\bj)]
\bx[\bij(\bk)]).
\end{align}

Since $f$ is homogenous and multilinear,
the contributing monomials $\bx[\bij(\bj)]\bx[\bij(\bk)]$ in \eqref{eq:diff_real_poly} have a simple structure.
First, any variable $(\bx_b)_c$ from the  $(d-1)n$ variables may appear only with a degree in $\{0,1,2\}$. In addition, those variables appearing with degree $2$ are reached only through a matched pair of indices $\bj_\ell=\bk_\ell$ for some $\ell\in[d-1]$.
An additional source of simplification stems from the fact that Rademacher random variables $\xi\in\{\pm1\}$ are to be substituted into the variables of $g$ and the latter always satisfy $\xi^2=1$.  
Such squares can then be eliminated (i.e., replaced with $1$) in 
$\bx[\bij(\bj)]\bx[\bij(\bk)]$ without affecting the distribution of $g(\bxi_1,\dots,\bxi_{d-1})$.
Put another way, the replacement
\begin{align*}
\bx[\bij(\bj)]\bx[\bij(\bk)] \,\to\,    \bx[(\bij(\bj)\cup\bij(\bk))\setminus(\bij(\bj)\cap\bij(\bk))]
\end{align*}
can be applied.
It follows that for any $(b,c)\in\bij(\bj)\cap\bij(\bk)$,
\begin{align*}
\EXPp{\frac
{\partial(\bxi[\bij(\bj)]\bxi[\bij(\bk)])
}{\partial (\bxi_b)
_{c}}}
=\EXPp{\frac
{\partial\bxi[(\bij(\bj)\cup\bij(\bk))\setminus(\bij(\bj)\cap\bij(\bk))]
}{\partial (\bxi_b)
_{c}}} = 0.
\end{align*}

Post the aforementioned elimination of squares, the remaining monomials have the form 
$$
\bx[(\bij(\bj)\cup\bij(\bk))\setminus(\bij(\bj)\cap\bij(\bk))]
= \bx[\bij(\bj)\setminus(\bij(\bj)\cap\bij(\bk))]\cdot\bx[\bij(\bk)\setminus(\bij(\bj)\cap\bij(\bk))];
$$
these are of even degree $2(d-1)-2|\bij(\bj)\cap\bij(\bk)|$ and contain only distinct variables all of degree one, such that $|\bij(\bj)\setminus(\bij(\bj)\cap\bij(\bk))|=|\bij(\bk)\setminus(\bij(\bj)\cap\bij(\bk))|$.
In particular, if such a leftover monomial, so to speak,  
contains a variable which differentiation is not carried out with respect to it, then the differentiated monomial, if not vanished, still contains that variable with degree one. Hence,  
its expectation, when each $\bx$ is replaced with its corresponding Rademacher $\bxi$, is zero.
This also implies that for any odd $m$,
\begin{align*}
    \EXPp{
\bm\nabla^m(\bxi[\bij(\bj)]\bxi[\bij(\bk)])
}=0.
\end{align*}
It follows that for any $\bell=\big((b_1,c_1),\dots,(b_m,c_m)\big)\in([d-1]\times[n])^m$
of even $m$, 
\begin{align*}
\EXPp{\bm\nabla^m_{\bell} (\bxi[\bij(\bj)]\bxi[\bij(\bk)])}  
=
    \begin{cases}
        1, &  (\bj,\bk)\in \Ext_m(\bell),
        \\
        0, & \text{otherwise} ,
    \end{cases}
\end{align*}
where we recall that
\begin{align*}
\Ext_m(\bell):= \Ext_{d,m}(\bell) = \left\{(\bj,\bk)\in [n]^{d-1}\times[n]^{d-1}:
(\bij(\bj)\cup\bij(\bk))\setminus(\bij(\bj)\cap\bij(\bk))=\bell
\right\}
\end{align*}
from~\eqref{eq:ext}. Conceptually, it is conducive to think of the set $\Ext_m(\bell)$ as comprised of the monomials that the tuple $\bell$ can be extended to (through $\bij$). In that, $\Ext_m(\bell)=\emptyset$ may hold for numerous tuples $\bell\in([d-1]\times[n])^m$.
Indeed, $D_m(\bell)\neq\emptyset$ if and only if there exist $\bj,\bk\in[n]^{d-1}$ such that $(\bij(\bj)\cup\bij(\bk))\setminus(\bij(\bj)\cap\bij(\bk))=\bell$.
This may happen only when $\bell=\big((b_1,c_1),\dots,(b_m,c_m)\big)$ is such that $(b_i,c_i)$ are all distinct and the value of any $b_i$ appears in $\bell$ exactly twice. It is then conducive to define
$\Rel_m := \Rel_{d,m}$  which has been introduced already in~\eqref{eq:Rel}. 

\bigskip
\noindent
{\tt Frobenius norm estimation.} Gearing up towards an application of Theorem~\ref{thm:Adamcask}, we next present  
an estimation for $\norm{\EXPp{\bm\nabla^m g(\bxi_1,\dots,\bxi_{d-1})}}_{\frob}^2$, whenever $m\in[2(d-1)]$ is even.
\begin{align}
\norm{\EXPp{\bm\nabla^m g(\bxi_1,\dots,\bxi_{d-1})}}_{\frob}^2
&=
\sum_{\bell\in([d-1]\times[n])^m}
\left(\EXPp{\bm\nabla^m g(\bxi_1,\dots,\bxi_{d-1})}\right)_{\bell}^2 \nonumber
\\&=
\sum_{\bell\in([d-1]\times[n])^m}
\left(\EXPp{\bm\nabla^m_{\bell} g(\bxi_1,\dots,\bxi_{d-1})}\right)^2 \nonumber
\\&=
\sum_{\bell\in([d-1]\times[n])^m}
\left(
\sum_{\bj,\bk \in \indset{d-1}} 
(\mathcal{A}^\trans\mathcal{A})_{\bj \bk} \cdot
\EXPp{\bm\nabla^m_{\bell} (
\bxi[\bij(\bj)]
\bxi[\bij(\bk)])}\right)^2 \nonumber
\\&=
\sum_{\bell\in([d-1]\times[n])^m}
\left(
\sum_{(\bj,\bk) \in \Ext_{m}(\bell)} 
(\mathcal{A}^\trans\mathcal{A})_{\bj \bk}
\right)^2 \nonumber
\\&=
\sum_{\bell\in \cL_m}
\left(
\sum_{(\bj,\bk) \in \Ext_{m}(\bell)} 
(\mathcal{A}^\trans\mathcal{A})_{\bj \bk}
\right)^2 =  \|\bm{\Game}_{d,m}f\|^2, \label{eq:GameNorm}
\end{align}
where for the last equality we recall that $i=d$ has been set above for comfort. 

\medskip
\noindent
{\tt Applying Theorem~\ref{thm:Adamcask}.} Write 
\begin{align*}
\PR \left\{ g(\bxi_1,\ldots\bxi_d) \leq \frac{\|f\|^2_\frob}{4} \right\} &  \overset{\phantom{\eqref{eq:GameNorm}}}{\leq}  \exp \left(-\Omega_d(1) \min_{\substack{k \in [2(d-1)]\\ k\; \text{even}}}  \left(\frac{\|f\|^2_\frob}{\|\Ex \nabla^k g(\bxi_1,\ldots,\bxi_d)\|_\frob} \right)^{1/\Theta_d(1)} \right) \\
& \overset{\eqref{eq:GameNorm}}{=} \exp \left(-\Omega_d(1) \min_{\substack{k \in [2(d-1)]\\ k\; \text{even}}}  \left(\frac{\|f\|^2_\frob}{\GameNorm} \right)^{1/\Theta_d(1)} \right). 
\end{align*}
This concludes our proof of Lemma~\ref{lem:v-concentration}.\hfill$\blacksquare$

\subsection{Proof of Theorem~\ref{thm:main-poly-2}}\label{sec:proof-main-poly}

Given $r\in\mathbb N$, then for a decoupled Rademacher chaos per the premise of Theorem~\ref{thm:main-poly-2}, performing 
$\ell_1,\dots,\ell_d$ flips on $\bxi_1,
\ldots,\bxi_d$, respectively, such that $\sum_{i=1}^d \ell_i \leq r$, may alter the value of $f(\bxi_1,
\ldots,\bxi_d)$ by at most
\begin{align} 
\label{eq:eps_def}
\Delta_{f,\bXi,r} := \Delta_{f,\bXi} (\ell_1,\ldots,\ell_d) := \maxoverball{\bpsi}{\bxi}{i}{d} \Big| f(\bvs{\bpsi}{1},
\dots, \bvs{\bpsi}{d}) - f(\bvs{\bxi}{1}, 
\dots, \bvs{\bxi}{d}) \Big|,
\end{align}
where, as defined above,
$
N_{\ell}(\bxi_i) := \{\bpsi\in\{-1,1\}^{n}: d_{\ham}(\bpsi,\bxi_i)\leq \ell\}
$
is the Hamming neighbourhood/ball of radius $\ell$ about $\bxi_i$. 

As seen in the proof of Theorem~\ref{thm::main-bf}, interest in $\Delta_{f,\bXi,k}$ arises from the following inclusion of events
\begin{equation}\label{eq:inclusion-poly-1}
\left\{\res_x^f(\bXi) \leq r\right\}
\subseteq
\Big\{|f(\bXi) - x | \leq \Delta_{f,\bXi,r}\Big\}.
\end{equation}
The following lemma establishes an upper bound on $\Delta_{f,\bXi,r}$; its counterpart in the proof of Theorem~\ref{thm::main-bf} is~\eqref{eq:eps_xi_k}.

\begin{lemma}\label{lem:poly-radius}
$
    \Delta_{f,\bXi} (\ell_1,\ldots,\ell_d) \leq
    \sum_{I \subseteq [d]: I\neq\emptyset} (2r)^{|I|}
    \cdot \|f_{I}(\bXi_{\quot{I}})\|_{\infty} =:\, \eps_{f,r}(\bXi_{\quot{I}})
$.
\end{lemma}

\noindent
We postpone the proof of Lemma~\ref{lem:poly-radius} until Section~\ref{sec:poly-raidus} and proceed with our proof of Theorem~\ref{thm:main-poly-2} assuming the former holds true. 

\medskip

Equipped with Lemma~\ref{lem:poly-radius}, we may proceed to note that the inclusion of events seen in~\eqref{eq:inclusion-poly-1} can be extended as to read as follows
\begin{align*}
\left\{\res_x^f(\bXi) \leq r\right\}
&\subseteq
\Big\{|f(\bXi) - x | \leq \eps_{f,r}(\bXi)\Big\}
\\ &\subseteq \bigcup_{I\subseteq[d]:I\neq\emptyset}\left\{
|f(\bXi) - x | \leq 2^d(2r)^{|I|} 
   \|f_{I}(\bXi_{\quot{I}})\|_{\infty}\right\},
\end{align*}
where the last inclusion is owing to the union-bound. We may then write
\begin{align}
\nonumber
\sup_{x\in\R}\PROBp{ \res_x^f(\bXi) \leq r}
& \leq \sup_{x\in\R}\PR\Big\{ |f(\bXi) - x | \leq \eps_{f,r}(\bXi)\Big\}
\\
\nonumber
&
\leq\sum_{I \subseteq [d]: I\neq\emptyset}
\sup_{x\in\R}\PR \Big\{ |f(\bXi)- x | \leq 2^d(2r)^{|I|} \cdot \|f_{I}(\bXi_{\quot{I}})\|_{\infty}\Big\}
\\
\label{eq:small_ball_est_bf}
&=
\sum_{I \subseteq [d]: I\neq\emptyset}
\sup_{x\in\R}\PR \Big\{ 
|f(\bXi)
- x | 
\leq \eps_{I,f,r}(\bXi_{\quot{I}})\Big\},
\end{align}
where
\begin{equation}
\label{eq:radius-poly-I}
\eps_{I,f,r}(\bXi_{\quot{I}}) :=2^d(2r)^{|I|}\cdot \|f_{I}(\bXi_{\quot{I}})\|_{\infty}.
\end{equation}

The next lemma bounds a single summand of the sum appearing on the right hand side of~\eqref{eq:small_ball_est_bf}; its counterpart in the proof of Theorem~\ref{thm::main-bf} is Lemma~\ref{lem:small_ball_bilinear}.

\begin{lemma}\label{lem:small-ball-multilinear}
There exist constant $C > 0 $ (independent of $d$), such that
for any nonempty $I\subseteq [d]$ and 
$r \in [n]$,
\begin{align}
\sup_{x\in\R}\PR \Big\{ & |f(\bXi)- x | \leq 
\eps_{I,f,r}(\bXi_{\quot{I}})
\Big\}
\leq \nonumber  \\
& \frac{C\EXPp{\eps_{I,f,r}(\bXi_{\quot{I}})}}{\|f\|_\frob} + \min{i \in I}\exp \left(-\Omega_d(1) \min_{\substack{ k \in [2(d-1)]  \\ k\; \text{even}}} \left( \frac{ \|f\|^2_\frob}{\GameNorm}\right)^{1/\Theta_d(1)} \right). \label{eq:small_ball_poly}
\end{align}    
\end{lemma}

Postponing the proof of Lemma~\ref{lem:small-ball-multilinear} until Section~\ref{sec:lem:small-ball-multilinear}, we proceed with our argument for Theorem~\ref{thm:main-poly}. The next ingredient is an upper bound on $\EXPp{\eps_{I,f,r}(\bXi)}$ seen on the right hand side of~\eqref{eq:small_ball_poly}. The following lemma delivers such a bound; statement of which requires 
that a specific version (taken from~\cite[Equation~(4.3.2)]{de2012decoupling}) of the so called {\sl Young modulus function} be defined. To that end, set $\alpha = 2/d$, put $x_0 =\left(\frac{1-\alpha}{\alpha}\right)^{\frac{1}{\alpha}}$, and define $\YM:\R^+ \to \R^+$ to be given by 
\begin{align}
\label{eq:Phi}
    \YM(x) =
    \begin{cases}
        (1-\alpha)e^{x_0^\alpha} \left(\frac{x}{x_0}\right), & \quad 0\leq x < x_0;
        \\ e^{x^\alpha}-\alpha e^{x_0^\alpha}, & \quad x_0 \leq x.
    \end{cases}
\end{align}
As noted in~\cite{de2012decoupling}, $\Phi$ is strictly increasing to $\infty$, convex, and $\Phi(0)=0$ so that $\Phi^{-1}$ is well defined and concave. The counterpart to the bound obtained in the proof of Theorem~\ref{thm::main-bf} using Dudley's maximal inequality (Theorem~\ref{thm::dudley-max}), namely~\eqref{eq:logn-appears}, reads as follows.

\begin{lemma}\label{lem:bound-expectation}
For any nonempty set of dimensions $I \subseteq [d]$, the equality 
\begin{equation}\label{eq:bound-expectation}
\EXPp{\eps_{I,f,r}(\bXi_{\quot{I}})} = O_d(1) (2r)^{|I|}
    \cdot \max_{d_I \in \mathcal{T}_I}\norm{f_{I,d_I}}_\frob
    \cdot
    \Phi^{-1}\left(\Pi_{i\in I} n\right)
\end{equation}
holds.
\end{lemma}

\noindent
The quantity $O_d(1)$ seen in~\eqref{eq:bound-expectation} is exponential in $d$. 
A proof of Lemma~\ref{lem:bound-expectation} can be seen in Section~\ref{sec:lem:bound-expectation}. 

\medskip

The assumption that $n = \omega(d^d)$ for every $i \in [d]$, appearing in the premise of Theorem~\ref{thm:main-poly-2}, implies that  $n^{|I|} = \Pi_{i\in I} n\geq x_0$ holds (as $x_0 =O(d^d)$, by definition) and thus, on account of $\Phi$ being strictly increasing, $\Phi(n^{|I|}) \geq \Phi(x_0)$ holds as well. The latter, coupled with~\eqref{eq:bound-expectation} and the definition of $\Phi^{-1}(\cdot)$, yields that for any nonempty $I \subseteq [d]$ the following equality 
\begin{equation}\label{eq:bound-expectation-2}
\EXPp{\eps_{I,f,r}(\bXi_{\quot{I}})} 
    =
    O_d(1)(2r)^{|I|}
    \cdot
     \log^{\frac{d}{2}}\left(\Pi_{i\in I} n\right)
     \cdot\max_{d_I \in \mathcal{T}_I}\norm{f_{I,d_I}}_\frob.
\end{equation}
holds. 

\medskip

We are now in position to conclude our proof of Theorem~\ref{thm:main-poly-2}. 
Substituting~\eqref{eq:bound-expectation-2}  into~\eqref{eq:small_ball_poly} and subsequently into the right hand side of~\eqref{eq:small_ball_est_bf}, we attain
{\small 
\begin{align*}
    &\sum_{I \subseteq [d]: I\neq\emptyset}
\sup_{x\in\R}\PROBp{ 
|f(\bXi) - x | 
\leq
\eps_{I,f,r}(\bXi_{\quot{I}})}
 =\\ 
 &
\sum_{I \subseteq [d]: I\neq\emptyset}
\frac{O_d((2r)^{|I|})
    \cdot
     \log^{\frac{d}{2}}\left(\Pi_{i\in I} n\right)
     \cdot\max_{d_I \in \mathcal{T}_I}\norm{f_{I,d_I}}_\frob}{ \|f\|_\frob}
  + \sum_{I \subseteq [d]: I\neq\emptyset}\min_{i \in I}\exp \left(-\Omega_d(1) \min_{\substack{k \in [2(d-1)]  \\ k\; \text{even}}} \left( \frac{ \|f\|^2_\frob}{\GameNorm}\right)^{1/\Theta_d(1)} \right).
\end{align*}
}
The assertion of Theorem~\ref{thm:main-poly-2} follows. 
\hfill$\blacksquare$

\subsubsection{Proof of Lemma~\ref{lem:poly-radius}}\label{sec:poly-raidus}

Our proof of this lemma has three ingredients; the first of which is seen in~\eqref{eq:Delta-initial-bound} below and is developed next.
We start by setting up the following notation. Given the sequence $(\bxi_i)_{i \in [d]}$ and two integers $1 \leq i, j \leq d$, define $\bXi[i:j] \coloneqq \bxi_i \circ \dots \circ \bxi_j$ , note that for $i,j \in [d]$ such that $j <i$ the notation $\bXi[i:j]$ indicates the empty sequence. Similarly, define $\bPsi[i:j] \coloneqq \bpsi_i \circ \cdots \circ \bpsi_j$ and $\bZeta[i:j] \coloneqq \bzeta_i \circ \cdots \circ \bzeta_j$, and for $I \subseteq [d]$ and $d_I \in \mathcal{T}_I$, define $\bdelta_{I,d_I}[i:j] \coloneqq \bdelta_{I,d_I}(i)\circ \cdots \circ \bdelta_{I,d_I}(j)$. Using this notation, write 
\begin{align}
    \Delta_{f,k}&(\ell_1,\ldots,\ell_d) \nonumber \\ 
    & = \max_{(\bpsi_i \in N_{\ell_i}(\bxi_i))_{i\in [d]}} \Big| f\big(\bPsi[1:d]) - f\big(\bXi[1:d]) \Big| \nonumber\\
    & = \max_{(\bpsi_i \in N_{\ell_i}(\bxi_i))_{i\in [d]}} \Big| \sum_{i=1}^d f\big(\bPsi[1:i] \circ \bXi[i+1:d] \big) - f\big(\bPsi[1:i-1] \circ \bXi[i:d]\big)  \Big| \nonumber \\
    & \leq \max_{(\bpsi_i \in N_{\ell_i}(\bxi_i))_{i\in [d]}} \sum_{i=1}^d \Big| f\big(\bPsi[1:i] \circ \bXi[i+1:d]\big) - f\big(\bPsi[1:i-1] \circ \bXi[i:d]\big) \Big| \nonumber \\
    & \leq \sum_{i=1}^d \max_{(\bpsi_i \in N_{\ell_i}(\bxi_i))_{i\in [d]}} \Big| f\big(\bPsi[1:i-1] \circ {\bm (}\bpsi_i - \bxi_i{\bm )} \circ \bXi[i+1:d]\big)\Big|,\label{eq:Delta-initial-bound}
\end{align}
where for the last inequality we utilise the convexity of maximisation as well as the multi-linearity of $f$. Inequality~\eqref{eq:Delta-initial-bound} essentially decomposes the total change affecting the value $f$ into a sum in which the flips are carried out one dimension after another. 

\medskip
The second ingredient of our proof is a bound on the change to the value of $f$ incurred through conducting flips along a single dimension. Given $i \in [d]$, the nonzero entries of $\bpsi_i-\bxi_i$ (the vector encountered in~\eqref{eq:Delta-initial-bound}) are the entries over which sign-flips are performed in $\bxi_i$; in that, $|(\bpsi_i-\bxi_i)_j| \in \{0,2\}$ holds for every $j \in [n_i]$. To estimate the effect of conducting $\ell_i$ sign-flips over $\bxi_i$, let 
$$
\bzeta_1,\ldots, \bzeta_d \in \{-1,0,1\}^{n_1}, \ldots, \bzeta_d \in \{-1,0,1\}^{n_d}
$$
be arbitrary\footnote{The need to allow for these vectors to have zero entries is in anticipation of future invocations of~\eqref{eq:single-dim-flip}.} and note that 
performing said sign-flips may alter the value of 
$$
f\big(\bZeta[1:i-1] \circ \bxi_i \circ \bZeta[i+1:d])
$$ 
by at most 
\begin{equation}\label{eq:single-dim-flip}
2\ell_i \max_{j \in [n_i]} \left| f\big( \bZeta[1:i-1] \circ \be_j \circ \bZeta[i+1:d] \big) \right|.
\end{equation}

The third ingredient of our proof is captured through the following claim. 

\begin{claim}\label{clm:switch}
Let $i \in [d]$ and $j \in [n_i]$ be fixed. Let $\big( \bpsi_k \in N_{\ell_k}(\bxi_k)\big)_{k \in [i-1]}$ and let $\big(\bzeta_{k} \in \{-1,0,1\}^{n_{k}}\big)_{k \in [i+1,d]}$ be arbitrary. 
Then,
\begin{align}
\Big|f \big(\bPsi[1:i-1]\circ \be_j \circ \bZeta[i+1:d] \big) \Big|
\leq \sum_{I \subseteq [i-1]} (2r)^{|I|} \max_{d_I \in \mathcal{T}_I} \left| f\big(\bdelta_{I,d_I}[1:i-1]\circ \be_j\circ \bZeta[i+1:d]  \big)\right|  \label{eq:switch} 
\end{align}
\end{claim}

Postponing the proof of Claim~\ref{clm:switch} until the end of this section, we proceed to deducing the assertion of the lemma from the aforementioned three ingredients. Indeed, equipped with these we may write 

\begin{align*}
\Delta_{f,k}(\ell_1,\ldots,\ell_d) & \leq \sum_{i=1}^d 2 \ell_i \max_{j \in [n_i]} \max_{\left(\bpsi_i \in N_{\ell_i}(\bxi_i) \right)_{i \in [d]}} \left|f \big(\bPsi[1:i-1 ]\circ \be_j \circ \bXi[i+1:d] \big) \right|\\
& \leq \sum_{i=1}^d 2 \ell_i \max_{j \in [n_i]} \sum_{I \subseteq [i-1]} (2r)^{|I|} \max_{d_I \in \mathcal{T}_I} \left| f\big(\bdelta_{I,d_I}[1:i-1]\circ \be_j\circ \bZeta[i+1:d]  \big)\right|\\
& \leq \sum_{i=1}^d (2r)^{i}\sum_{I \subseteq [i-1]} \max_{j \in [n_i]} \max_{d_I \in \mathcal{T}_I} \left| f\big(\bdelta_{I,d_I}[1:i-1]\circ \be_j\circ \bZeta[i+1:d] \big)\right| \\
& \leq \sum_{I \subseteq [d]} (2r)^{|I|} \max_{d_I \in \mathcal{T}_I} \left| f\big(\bdelta_{I,d_I}[1:d]\big) \right| \\
& = \sum_{I \subseteq [d]} (2r)^{|I|} \|f(\bXi)\|_{I,\infty},
\end{align*}
where the first inequality is owing to \eqref{eq:Delta-initial-bound} and~\eqref{eq:single-dim-flip}; the second inequality is supported by~\eqref{eq:switch}; for the third inequality we rely on $\ell_i \leq r$ and the fact that the inner sum in the preceding line ranges over subsets of $[i-1]$. For the penultimate inequality, note that the preceding sum ranges over all subsets of $[d]$ - for each $i \in [d]$ the sum ranges over all subsets of $[i-1]$; the maximisation over $[n_i]$ is absorbed by the maximisation over $d_I$.  The definition of $f(\bXi)_{I,\infty}$ delivers the last equality. 

\medskip
It remains to prove Claim~\ref{clm:switch}. 
\medskip

\noindent
{\em Proof of Claim~\ref{clm:switch}.} The proof is by induction on $i$ (i.e. the position of the standard base vector). For $i= 1$, the sum appearing on the right hand side of~\eqref{eq:switch} ranges only over $I = \emptyset$ so that~\eqref{eq:switch} trivially holds. Proceding to the induction step, assume that the claim holds for position $i-1$ and consider the claim for the $i$th position. Start by writing 
\begin{align*}
\Big| f \big(\bPsi[1:i-1 ]\circ \be_j \circ \bZeta[i+1:d] \big)\Big| & = \Big| f \big(\bPsi[1:i-1 ]\circ \be_j \circ \bZeta[i+1:d] \big)  - K + K \Big| \\
& \leq \Big|f \bPsi[1:i-1 ]\circ \be_j \circ \bZeta[i+1:d]) -K \Big| + |K|, 
\end{align*}
where 
$$
K := f \big(\bPsi[1:i-2]\circ\bxi_{i-1}\circ \be_j \circ \bZeta[i+1:d] \big). 
$$ 
Multilinearity of $f$ yields 
$$
f \big(\bPsi[1:i-1 ]\circ \be_j \circ \bZeta[i+1:d]) \big) - K = f \Big(\bPsi[1:i-2]\circ {\bm (}\bpsi_{i-1} - \bxi_{i-1}{\bm )}\circ \be_j \circ \bZeta[i+1:d] \Big)
$$
allowing us to write 
\begin{align}
\Big| f  \big(\bPsi[1:i-1 ]&\circ \be_j \circ \bZeta[i+1:d]) \big)\Big| \nonumber \\
 &\overset{\phantom{\eqref{eq:single-dim-flip}}}{\leq} \Big|f \Big(\bPsi[1:i-2]\circ {\bm (}\bpsi_{i-1} - \bxi_{i-1}{\bm )}\circ \be_j \circ \bZeta[i+1:d] \Big) \Big| + |K|. \nonumber\\
 & \overset{\eqref{eq:single-dim-flip}}{\leq}2 \ell_{i-1} \max_{k \in [n_{i-1}]} \Big| f\big(\bPsi[1:i-2]\circ\be_k \circ \be_j \circ \bZeta[i+1:d] \big)\Big| + |K|.\label{eq:induction split}
\end{align}
Applying the induction hypothesis on each of the two summands appearing on the right hand side of~\eqref{eq:induction split} yields  
\begin{align}
\label{eq:after-induction}
\Big| f  \big(\bPsi[1:i-1 ]\circ & \be_j \circ \bZeta[i+1:d]) \big)\Big| \leq \\
& 2 \ell_{i-1} \max_{k \in [n]} \sum_{I \subseteq [i-2]} (2r)^{|I|} \max_{d_I \in \mathcal{T}_I} \Big|f \big( \bdelta_{I,d_I}[1:i-2] \circ \be_k \circ \be_j \circ \bZeta[i+1:d] \big) \Big| \nonumber \\
& +  \nonumber \\ 
&\sum_{I \subseteq [i-2]} (2r)^{|I|} \max_{d_I \in \mathcal{T}_I} \Big|f \big(\bdelta_{I,d_I}[1:i-2] \circ \bxi_{i-1} \circ \be_j \circ \bZeta[i+1:d] \big) \Big|. \nonumber
\end{align}
To conclude, note that both sums appearing on the right hand side of~\eqref{eq:after-induction} together do not exceed
\begin{equation}\label{eq:induction-end}
\sum_{I \subseteq [i-1]} (2r)^{|I|} \max_{d_I \in \mathcal{T}_I} \Big|f \big(\bdelta_{I,d_I}[1:i-2]\circ \bdelta_{I,d_I}(i-1) \circ \be_j \circ  \bZeta[i+1:d] \big) \Big|.
\end{equation}
To see this, note that the first of these sums can be viewed as ranging over all subsets of $[i-1]$ containing the element $i-1$ and thus the replacement of $\bm e_k$ by $\bdelta_{I,d_I}(i-1)$ leads to 
\begin{align}
2 \ell_{i-1} & \max_{k \in [n]} \sum_{I \subseteq [i-2]} (2r)^{|I|} \max_{d_I \in \mathcal{T}_I} \Big|f \big( \bdelta_{I,d_I}[1:i-2] \circ \be_k\circ \be_j \circ \bZeta[i+1:d] \big) \Big| \label{eq:first-sum} \\
& \overset{\phantom{\ell_{i-1} \leq k}}{\leq} 2 \ell_{i-1}  \sum_{\substack{I \subseteq [i-1]:\\ i-1 \in I}} (2r)^{|I|-1} \max_{d_I \in \mathcal{T}_I} \Big|f \big( \bdelta_{I,d_I}[1:i-2] \circ \bdelta_{I,d_I}(i-1)\circ \be_j \circ \bZeta[i+1:d]  \big) \Big| \nonumber \\
& \overset{\ell_{i-1} \leq r}{\leq} \sum_{\substack{I \subseteq [i-1]:\\ i-1 \in I}} (2r)^{|I|} \max_{d_I \in \mathcal{T}_I} \Big|f \big( \bdelta_{I,d_I}[1:i-2] \circ \bdelta_{I,d_I}(i-1)\circ \be_j \circ \bZeta[i+1:d]  \big) \Big|, \nonumber 
\end{align}
where here the maximisation over $[n_i]$ is accounted for through the maximisaiton over $d_I$ which now ranges over tuples of size $i-1$ with the dimension $i-1$ included. 

The second sum can be viewed as a sum over the subsets of $[i-1]$ not containing the element $i-1$ allowing for the replacement of $\bxi_{i-1}$ with $\bdelta_{I,d_I}(i-1)$ and thus yielding 
\begin{align}
 \sum_{I \subseteq [i-2]} (2r)^{|I|} & \max_{d_I \in \mathcal{T}_I} \Big|f \big(\bdelta_{I,d_I}[1:i-2] \circ \bxi_{i-1} \circ \be_j \circ  \bZeta[i+1:d] \big) \Big| \label{eq:2nd-sum}\\
 & = \sum_{\substack{I \subseteq [i-1]:\\ i-1 \notin I}} (2r)^{|I|}  \max_{d_I \in \mathcal{T}_I} \Big|f \big(\bdelta_{I,d_I}[1:i-2] \circ \bdelta_{I,d_I}(i-1) \circ \be_j \circ  \bZeta[i+1:d]\big) \Big|.\nonumber
\end{align}
The combination of~\eqref{eq:first-sum} and~\eqref{eq:2nd-sum} yields~\eqref{eq:induction-end} and concludes the proof of Claim~\ref{clm:switch}.\hfill$\square$
\bigskip

\noindent
This concludes our proof of Lemma~\ref{lem:poly-radius}. \hfill $\blacksquare$

\subsubsection{Proof of Lemma~\ref{lem:small-ball-multilinear}}\label{sec:lem:small-ball-multilinear}

Let a nonempty $I \subseteq [d]$ be given and fix an arbitrary $i \in I$. Apply the Law of Total Probability (twice) as to write
{\small
\begin{align}   
\sup_{x\in\R}&\PR \Big\{|f\big(\bXi \big)- x | \leq 
\eps_{I,f,r}(\bXi_{\quot{I}}) \Big\} 
\leq \nonumber  \\
& \sup_{x\in\R} \PROBp{|f\big(\bXi \big)- x | \leq 
\eps_{I,f,r}(\bXi_{\quot{I}}) \; \Big\vert\; \norm{\bv_{f,i}(\bXi_{\quot{i}})}_2 \geq \frac{\norm{f}_\frob}{2}} + \PR_{\bXi_{\quot{i}}} \left\{ \norm{\bv_{f,i}(\bXi_{\quot{i}})}_2 < \frac{\norm{f}_\frob}{2}\right\} \leq \nonumber\\
& \EXP_{\bXi_{\quot{i}}}\left[\sup_{x\in\R}\PR_{\bxi_i}\left\{
|f\big(\bXi\big)- x | \leq 
\eps_{I,f,r}(\bXi_{\quot{I}}) 
\,\middle\vert\, \norm{\bv_{f,i}(\bXi_{\quot{i}})}_2 \geq  \frac{\norm{f}_\frob}{2} \;,\; 
\bXi_{\quot{i}}
\right\}\right] + \PR_{\bXi_{\quot{i}}}\left\{
\norm{\bv_{f,i}(\bXi_{\quot{i}})}_2 \leq  \frac{\norm{f}_\frob}{2}
\right\},\label{eq:conditioning}
\end{align}
}
where we recall that $\bXi_{\quot{i}}$ denotes the sequence of Rademacher vectors obtained from $\bXi$ by omitting $\bxi_i$. 

Owing to Lemma~\ref{lem:v-concentration}, 
\begin{equation}\label{eq:concentrated-frob-poly}
\PROBp{
\norm{\bv_{f,i}(\bXi_{\quot{i}})}_2 \leq  \frac{\norm{f}_\frob}{2}} \leq \exp \left(-\Omega_d(1) \min_{\substack{k \in [2(d-1)]  \\ k\; \text{even}}} \left( \frac{ \|f\|^2_\frob}{\GameNorm}\right)^{1/\Theta_d(1)} \right).
\end{equation}

Proceeding to the term appearing within the expectation seen on the right hand side of~\eqref{eq:conditioning}, recall that the vector
$\bv_{f,i}(\bXi_{\quot{i}})$ depends not on the vector $\bxi_i$ but only on the members of $\bXi_{\quot{i}}$.
Fix then a realisation $\Xi_{\quot{i}}$ of the Rademacher tensor $\bXi_{\quot{i}}$ for which the now fully determined vector $\bv_{f,i}(\Xi_{\quot{i}}) $ satisfies $\|\bv_{f,i}(\Xi_{\quot{i}})\|_2 \geq \|f\|_\frob/2 >0 $; the last inequality is owing to $\|f\|_\frob >0$ assumed in the premise of Theorem~\ref{thm:main-poly} through $f \not\equiv 0$. With $\bXi_{\quot{i}}$ fixed (to be $\Xi_{\quot{i}}$), the chaos $f(\bXi)$  reduces to a sum of $n_i$ independent random variables 
denoted $\tilde f(\bxi_i)$ and given by 
$$
\tilde f(\bxi_i) := \sum_{j \in [n_i]}(\bxi_i)_j \cdot \bv_{f,i}(\Xi_{\quot{i}})_j,
$$
where we recall that $\bv_{f,i}(\Xi_{\quot{i}})_j =  f_{\{i\},\{j\}}(\Xi_{\quot{i}})$. The random variable $\eps_{I,f,r}(\bXi_{\quot{I}})$ 
depends solely on the members of $\bXi_{\quot{I}}$. Hence,  the fixation $\bXi_{\quot{i}} = \Xi_{\quot{i}}$ completely determines this random variable and we write $\tilde \eps_{I,f,r}(\Xi_{\quot{I}})$ to denote its value associated with the realisation $\Xi_{\quot{i}}$. Interest then shifts towards obtaining an upper bound on 
$$
\sup_{x\in\R}\PR_{\bxi_i}\left\{
|\tilde f\big(\bxi_i\big)- x | \leq 
\tilde \eps_{I,f,k}(\Xi_{\quot{I}}) 
\,\middle\vert\,  \bXi_{\quot{i}} = \Xi_{\quot{i}}, \;\norm{\bv_{f,i}(\Xi_{\quot{i}})}_2 \geq  \frac{\norm{f}_\frob}{2}
\right\}
$$
this we obtain through an application of the Kolmogorov-Rogozin inequality, namely Theorem~\ref{thm:KRI}.

Gearing up towards such an application, set
$$
X_j := (\bxi_i)_j \cdot \bv_{f,i}(\Xi_{\quot{i}})_j, \; \eps:= \tilde \eps_{I,f,r}(\Xi_{\quot{I}}), \; \text{and} \; \eps_j := |\bv_{f,i}(\Xi_{\quot{i}})_j|/2
$$
for each $j \in [n]$, where $X_j$, $\eps$, and $\eps_j$ are per Theorem~\ref{thm:KRI}. Owing to $\|\bv_{f,i}(\Xi_{\quot{i}})\|_2 > 0$, there exists a $j \in [n]$ for which $\eps_j$ is non-zero. For each such $j$, we may write that $\levy_{\eps_j}(X_j) =1/2$ as the mass of $X_j$ is supported on $\pm |\bv_{f,i}(\Xi_{\quot{i}})_j|$. All this collectively yields 
$$
\sum_{j=1}^{n} \eps_j^2 \big(1-\levy_{\eps_j}(X_j)\big) = \frac{1}{8} \sum_{j \in [n_i]} |\bv_{f,i}(\Xi_{\quot{i}})_j|^2 = \|\bv_{f,i}(\Xi_{\quot{i}})\|_2^2 / 8. 
$$

By definition, $\eps_j  < \|f_{i}(\Xi_{\quot{i}})\|_{,\infty}$ holds for every $j \in [n]$. Then, owing to the definition of $\eps_{I,f,r}(\bXi_{\quot{I}})$ seen in~\eqref{eq:radius-poly-I} as well as the fact that $i \in I$, it follows that $\tilde \eps_{I,f,r}(\Xi_{\quot{I}}) > \eps_j$ holds for every $j \in [n]$ whenever $r \geq 1$. We may thus apply Theorem~\ref{thm:KRI} which in turn yields
$$
\sup_{x\in\R}\PR_{\bxi_i} \left\{
|f\big(\bXi\big)- x | \leq 
\tilde \eps_{I,f,r}(\Xi_{\quot{I}}) 
\,\middle\vert\,  
\bXi_{\quot{i}} = \Xi_{\quot{i}} \;,\; \norm{\bv_{f,i}(\Xi_{\quot{i}})}_2 \geq  \frac{\norm{f}_\frob}{2} 
\right\} 
 \leq \frac{C' \cdot \tilde \eps_{I,f,r,}(\Xi_{\quot{I}})}{\|\bv_{f,i}(\Xi_{\quot{i}})\|_2}  \leq  \frac{C \cdot \tilde \eps_{I,f,r}(\Xi_{\quot{I}})}{\|f\|_\frob}, 
$$
where $C,C' >0$ are constants arising from  Theorem~\ref{thm:KRI} (and consequently independent of $d$). 

To conclude the proof of this lemma, substitute~\eqref{eq:concentrated-frob-poly} as well as the last obtained bound into the right hand side of~\eqref{eq:conditioning} as to reach 
\begin{align*}
\sup_{x\in\R}\PR \Big\{&|f\big(\bXi \big)- x | \leq 
\eps_{I,f,r}(\bXi_{\quot{I}}) \Big\} \leq \\ 
& \frac{C\Ex_{\bXi_{\quot{i}}} \left\{\eps_{I,f,r}(\bXi_{\quot{I}}) \right\}}{\|f\|_\frob} + \exp \left(-\Omega_d(1) \min_{\substack{k \in [2(d-1)]  \\ k\; \text{even}}} \left( \frac{ \|f\|^2_\frob}{\GameNorm}\right)^{1/\Theta_d(1)} \right)   
\end{align*}
concluding the proof of the lemma. \hfill$\blacksquare$

\subsubsection{Proof of Lemma~\ref{lem:bound-expectation}}\label{sec:lem:bound-expectation}
Let $I \subseteq [d]$ be given. Start with 
$$
\Ex \left\{\eps_{I,f,r}(\bXi_{\quot{I}})\right\} \overset{\eqref{eq:radius-poly-I}}{=} 2^d(2r)^{|I|} \Ex \left\{\|f_{I}(\bXi_{\quot{I}})\|_{\infty}\right\} \overset{\eqref{eq:res-inf-norm}}{=} 2^d(2r)^{|I|} \Ex \left\{\max_{d_I \in \mathcal{T}_I} |f_{I,d_I}(\bXi_{\quot{I}})| \right\}.
$$
Then, recalling $\Phi$ from~\eqref{eq:Phi}, note that the inequality
\begin{align}
    \Ex \left\{ \max_{d_I \in \mathcal{T}_I} |f_{I,d_I}(\bXi_{\quot{I}})| \right\} &= \Ex \left\{\Phi^{-1}\left(\Phi\left( \max_{d_I \in \mathcal{T}_I} \frac{\lambda |f_{I,d_I}(\bXi_{\quot{I}})|}{\lambda}\right)\right)\right\} \nonumber \\
    & \leq \lambda \Phi^{-1} \left( \Ex \left\{ \Phi \left( \max_{d_I \in \mathcal{T}_I} \frac{|f_{I,d_I}(\bXi_{\quot{I}})|}{\lambda} \right)\right\}\right) \nonumber \\
    & \leq \lambda \Phi^{-1} \left(\Ex \left\{ \sum_{d_I \in \mathcal{T}_I} \Phi \left(\frac{|f_{I,d_I}(\bXi_{\quot{I}})|}{\lambda}\right)\right\} \right) \nonumber \\
    & = \lambda \Phi^{-1} \left( \sum_{d_I \in \mathcal{T}_I} \Ex \left\{ \Phi \left(\frac{|f_{I,d_I}(\bXi_{\quot{I}})|}{\lambda}\right)\right\} \right) \label{eq:res-chaos-exp-bound}
\end{align}
holds for any $\lambda >0$, where the second inequality is owing to Jensen's inequality and $\Phi^{-1}$ being concave. Owing to~\cite[Equation~(4.3.4)]{de2012decoupling} as well as our choice for $\alpha$ in defining $\Phi$ (see~\eqref{eq:Phi}), there exists a quantity $C_d := C_d(d)$ for which the inequality 
$$
\inf \left\{c > 0: \Ex \left\{ \Phi \left(\frac{|f_{I,d_I}(\bXi_{\quot{I}})|}{c}\right) \right\} \leq 1 \right\} \leq C_d \cdot \|f_{I,d_I}\|_\frob
$$
holds. Returning to~\eqref{eq:res-chaos-exp-bound} with $\lambda = C_d \cdot \max_{d_I \in \mathcal{T}_I}\|f_{I,d_I}\|_\frob$ yields 
$$
\Ex \left\{ \max_{d_I \in \mathcal{T}_I} |f_{I,d_I}(\bXi_{\quot{I}})| \right\} \leq C_d \cdot \max_{d_I \in \mathcal{T}_I}\|f_{I,d_I}\|_\frob \cdot \Phi^{-1} \left( |\mathcal{T}_d|\right) = O_d(1) \cdot \max_{d_I \in \mathcal{T}_I}\|f_{I,d_I}\|_\frob \cdot \Phi^{-1} \left( \Pi_{i \in I}n\right)
$$
concluding the proof of the lemma. \hfill$\blacksquare$
\bibliographystyle{plain}
\bibliography{ref}

\appendix 

\section{Deducing Corollary~\ref{cor:transparent} from Theorem~\ref{thm::main-bf}}\label{par:deduce} Let $M$ be as in the premise of Corollary~\ref{cor:transparent}. In particular, the assumption that $\|M\|_\infty =1$ implies that $\|M\|_{\infty,2} \leq \sqrt{\|M\|_{\infty,0}}$, leading to 
\begin{align} 
f(M,n) \leq \frac{\min \{\|M\|_{\infty,0}, \sqrt{\|M\|_{\infty,0}\log n} \}}{\|M\|_\frob},
\qquad g(M,r) = \frac{\min\{r,\|M\|_{\infty,0}\}}{\|M\|_\frob}.\label{eq:f-and-g}
\end{align}
The term $\min \left\{\|M\|_{\infty,0}, \sqrt{\|M\|_{\infty,0}\log n}\right\}$ (seen in the nominator of $f(M,n)$ in~\eqref{eq:f-and-g}) compels us to distinguish  between two regimes, namely a {\em sparse} regime and a {\em dense} one, as defined in Corollary~\ref{cor:transparent}. Analysis of resilience guarantees in each such regime is as follows.   

\bigskip
\noindent 
{\em Sparse regime.} Given a matrix $M \not\equiv 0$ in this regime satisfying $\sr(M) = \omega(1)$, we seek to determine the largest $\ress$ for which $\sup_{x \in \R} \PROBp{\res_x^M(\bpsi,\bxi) \leq r} = o(1)$. By Theorem~\ref{thm::main-bf} it suffices to require that 
$\ress \cdot f(M,n) = o(1)$ as well as $r\cdot g(M,r) = o(1)$. The restriction 
\begin{equation}\label{eq:k-first-cond}
\frac{r\cdot \|M\|_{\infty,0}}{\|M\|_\frob}  = o(1)
\end{equation}
is imposed by $r \cdot f(M,n) = o(1)$. Subject to $\ress$ satisfying~\eqref{eq:k-first-cond}, the equality $r\cdot g(M,r) = o(1)$ asserts that we seek $\ress$ for which 
$$
\min\left\{\frac{r \|M\|_{\infty,0}}{\|M\|_\frob} \;,\;  \frac{r^2}{\|M\|_\frob} \right\} = o(1)
$$
holds. Overall we reach that any $M$ in the sparse regime a.a.s.\ has resilience as high as 
$$
o\left(\min \left\{\frac{\|M\|_\frob}{\|M\|_{\infty,0}}\;,\;\max \left\{\frac{\|M\|_\frob}{\|M\|_{\infty,0}}, \sqrt{\|M\|_\frob} \right\} \right\}\right) = o\left(\frac{\|M\|_\frob}{\|M\|_{\infty,0}} \right);
$$
recovering the probabilistic resilience guarantee asserted in Corollary~\ref{cor:transparent} for the sparse regime.

\bigskip
\noindent 
{\em Dense regime.} Similar analysis to the one performed in the sparse regime reveals that a matrix $M$ in the dense regime satisfying $\sr(M) = \omega(1)$ a.a.s.\ has resilience given by 
$$
o \left( \min \left\{\frac{\|M\|_\frob}{\sqrt{\|M\|_{\infty,0}\log n}} \;,\; \max \left\{\frac{\|M\|_\frob}{\|M\|_{\infty,0}}, \sqrt{\|M\|_\frob}  \right\}\right\}\right).
$$
In this regime, $\frac{\|M\|_\frob}{\|M\|_{\infty,0}} \leq \frac{\|M\|_\frob}{\sqrt{\|M\|_{\infty,0}\log n}}$. If the former prevails in the maximisation of the last display, i.e. if $\|M\|_\frob > \|M\|_{\infty,0}^2$ holds, then the same term previals in the minimisation yielding Option~2(a) seen in Corollary~\ref{cor:transparent}; otherwise, Option~2(b) is reached. This concludes our proof of Corollary~\ref{cor:transparent}.  

\section{Resilience of high-degree block-diagonal tensors}\label{par:block-diagonal} In this section, we prove Claim~\ref{clm:block-diagonal}. 
The argument proposed has two distinct parts. The first handles the exponential seen on the right hand side of~\eqref{eq:main-poly-res} and establishes that the latter vanishes for $\fbl$ rendering Theorem~\ref{thm:main-poly} meaningful for $\fbl$. The second part deals with the resilience estimation through the asymptotic magnitude of the sum appearing on the right hand side of~\eqref{eq:main-poly-res}.  

Starting with the exponential seen on the right hand side of~\eqref{eq:main-poly-res}, we prove that under the assumptions seen in the premise of the claim,
\begin{equation}\label{eq:exp-vanish}
\frac{\|\fbl\|_\frob^2}{\lVert \bm\Game \fbl \rVert} = \min_{m \in 2[d-1] } \frac{ \|\fbl\|^2_\frob}{\lVert\bm{\Game}_{i,m}\fbl\rVert} = \omega(1) 
\end{equation}
holds for every $i \in [d]$. This, in turn, yields that the aforementioned exponential vanishes in~\eqref{eq:main-poly-res}. To establish~\eqref{eq:exp-vanish}, it is more conducive to handle it under the fixation $i = d$ thus leading to simpler notation; the same argument holds for $i \in [d-1]$ as well by symmetry. 

The symmetry of $\fbl$ equips us with a useful property of $\mathcal{A} : = \mathcal{A}_d^{\fbl}$ which we record next. 
For any $k\in[n]$ and $\bj\in\indset{d-1}$,
\begin{align*}
\mathcal{A}_{k,\bj} =
\begin{cases}
    \ell, & (a-1)\blw+1\leq k,\bj \leq a \blw \\
    0, & \textup{otherwise},
\end{cases}
\end{align*}
where $a \in [n/\blw]$;
here we introduced the scaling factor $\ell$ in order to demonstrate that it plays no role in the resilience estimation performed using Theorem~\ref{thm:main-poly} which have been stated with $\ell=1$. 
So for any $\bj,\bk\in\indset{d-1}$,
\begin{equation}\label{eq:A3}
\left(\mathcal{A}^\trans \mathcal{A} \right)_{\bj\bk} = 
\begin{cases}
    \blw\ell^2, & (a-1)\blw+1\leq \bj,\bk \leq a \blw \\
    0, & \text{otherwise}.
\end{cases}
\end{equation}
 In that, the entry of $\mathcal{A}^\trans \mathcal{A}$ specified by the indices $\bj$ and $\bk$ vanishes unless the corresponding fibres of $\fbl$ specified by these indices lie in the same block of the tensor. 

With this understanding, we proceed to estimate the quantities $\lVert\bm{\Game}_{d,m}\fbl\rVert$ for $m\in2[d-1]$.
Equation~\eqref{eq:partial} reads 
\begin{equation}\label{eq:partial-2nd}
\lVert\bm{\Game}_{d,m}\fbl\rVert^2 = \sum_{\bell\in \Rel_{d,m}}
\left(
\sum_{(\bj,\bk) \in \Ext_{d,m}(\bell)} 
(\mathcal{A}^\trans\mathcal{A}              )_{\bj \bk}
\right)^2. 
\end{equation}
By \eqref{eq:A3}, for any 
$\bell=\big((b_1,c_1),\dots,(b_{\frac m 2},c_{\frac m 2}),(b_1,c'_1),\dots,(b_{\frac m 2},c'_{\frac m 2})\big)\in \Rel_{d,m}$,
we may restrict the internal sum to pairs $(\bj,\bk)\in\Ext_{d,m}(\bell)$ whose coordinates are all in the same block, that is, $(a-1)\blw+1\leq \bj,\bk \leq a \blw$ for some $a \in [n/\blw]$.
In addition, since $(\bj,\bk)$ is an extension of $\bell$, the latter must correspond to this same block $a$, namely,
$(a-1)\blw+1\leq c_1,\dots,c_{\frac m 2},c'_1,\dots,c'_{\frac m 2} \leq a \blw$.
As there are $O_d(\blw^{d-1-\frac m 2})$ such extensions,
$$
\sum_{(\bj,\bk) \in \Ext_{d,m}(\bell)} 
(\mathcal{A}^\trans\mathcal{A}              )_{\bj \bk}
= \blw\ell^2 \cdot
O_d(\blw^{d-1-\frac m 2})
= O_d(\ell^2\blw^{d-\frac m 2}).
$$
Since there are $n/\blw$ blocks and each block has $O_d(\blw^m)$ different $\bell$s,
the sum is
\begin{align*}
\sum_{\bell\in \Rel_{d,m}}
\left(
\sum_{(\bj,\bk) \in \Ext_{d,m}(\bell)} 
(\mathcal{A}^\trans\mathcal{A}              )_{\bj \bk}
\right)^2
= \frac n \blw\cdot O_d(\blw^m) \cdot 
\big(O_d(\ell^2\blw^{d-\frac m 2})\big)^2
=O_d(\ell^4 n\blw^{2d-1}).
\end{align*}

\medskip
Returning to~\eqref{eq:exp-vanish}, write $\|\fbl\|_\frob^2 = \frac{n}{\blw}\blw^d \ell^2 = n \blw^{d-1}  \ell^2$ and note that owing to the stipulation $\blw = o(n)$ appearing in the premise,  
\begin{align*}
\frac{\|\fbl\|_\frob^2}{\lVert\bm{\Game}_{d,m}\fbl\rVert} & 
= 
\Omega\left(\frac{\ell^2 n \blw^{d-1}  }{\sqrt{\ell^4 n\blw^{2d-1}}}\right) = \Omega \left(\sqrt{\frac{n}{\blw}}\right) = \omega(1)  .
\end{align*}

\medskip
Having established that the exponential appearing on the right hand side of~\eqref{eq:main-poly-res} has $o(1)$ order of magnitude, we turn to the sum appearing on the right hand side of~\eqref{eq:main-poly-res} and through which attain the proclaimed estimates for the resilience of $\fbl(\bxi_1,\dots,\bxi_d)$. Noting that for $\emptyset\neq I \subseteq [d]$ we have 
$\max_{d_I \in 
\indset{|I|}
}
\norm{\fbl_{I,d_I}}_\frob=\sqrt{\ell^2\blw^{d-|I|}}$,
\begin{align*}
\sum_{I \subseteq [d]: I\neq\emptyset}
r^{|I|}\cdot \max_{d_I \in 
\indset{|I|}
}
\frac{\norm{\fbl_{I,d_I}}_\frob}{\norm{\fbl}_\frob}
&=
\sum_{I \subseteq [d]: I\neq\emptyset}
r^{|I|}\cdot
\frac{\sqrt{\ell^2\blw^{d-|I|}}}{\sqrt{\ell^2 n \blw^{d-1}}}
\\&=
\sqrt{\frac{\blw}{n}}\sum_{I \subseteq [d]: I\neq\emptyset}
\left(\frac{r}{\sqrt{\blw}}\right)^{|I|}
=
\sqrt{\frac{\blw}{n}}
\left(\left(1+\frac{r}{\sqrt{\blw}}\right)^d
-1\right).
\end{align*}
This concludes our proof of Claim~\ref{clm:block-diagonal}. 

\section{Resilience of quadratic Rademacher chaos}\label{sec:quadratic-app}

The aim of this section is to make good on the claims made in Remark~\ref{rem:quadratic}. The main result of this section, reads as follows. 

\begin{theorem}
\label{thm:qf}
Let $0 \not\equiv M \in \mathbb{R}^{n \times n}$. Then, there exist constants $c_1,c_2,c_3,c_4>0$ such that for any integer $0< \ress \leq n$,
\begin{equation}
\label{eq:bound_qf}
\sup_{x \in \R} \PROB\left\{ \res_x^M(\bxi) \leq \ress\right\} \leq \Big(c_1 \ress \cdot f(M,n) + c_2\ress\cdot g(M,\ress) + \exp{\big(-c_3 \sr(M) \big)}\Big)^{1/4}
+ \frac{c_4}{n},
\end{equation}
where $\bxi \in \{\pm 1\}^n$ is a Rademacher vector and the chaos has the form $\bxi^\trans M \bxi$.
\end{theorem}

We do not pursue a high-degree variant of Theorem~\ref{thm:qf} for it seems that such a result can be attained through ideas unveiled in our proof of Theorem~\ref{thm:qf} and an adequate adaptation of a  decoupling argument seen in the work of Lovett~\cite{Lovett}.

\subsection{Proof of Theorem~\ref{thm:qf}}

\noindent
{\bf Assumptions.} Let $M \in\R^{n\times n}$ be a non-zero matrix. Any matrix $M$ can be uniquely decomposed into its symmetric and anti-symmetric parts 
$$
M=
\frac{1}{2}(M+M^\trans) + \frac{1}{2}(M-M^\trans).
$$ 
As $\frac{1}{2}\bxi^{\trans} (M-M^\trans) \bxi = 0$ holds for any $\bxi\in\{\pm 1\}^n$, we may take $M$ to be symmetric without loss of generality.
Next, writing $\diag(M)$ to denote the diagonal matrix whose main diagonal is that of $M$, it follows that 
\begin{align*}
 \bxi^\trans M \bxi = \bxi^\trans\diag(M)\bxi  + \bxi^\trans(M - \diag(M))\bxi
 =
\trace(M) + \bxi^\trans(M - \diag(M))\bxi,
\end{align*}
holds for any $\bxi \in \{\pm 1\}^n$. 
Consequently, 
\begin{align*}
\res_{x}^M(\bxi)=\res_{x-\trace(M)}^{M-\diag(M)}(\bxi),
\end{align*}
holds for any $x\in\R$ and any $\bxi \in \{\pm 1\}^n$. We may thus assume, without loss of generality, that the main diagonal of $M$ is zero. 

\bigskip
\noindent
{\bf Small-ball probabilities.}
As in the bilinear case, performing $\ress$ flips on $\bxi$ may alter the value of $\bxi ^\trans M \bxi$ by at most \begin{align*}
\max_{\bxi'\in N_{k}(\bxi)} |(\bxi') ^\trans M \bxi' - \bxi ^\trans M \bxi| 
& \leq 
2r\max_{\bxi'\in N_{k}(\bxi)}\norm{M(\bxi'-\bxi)}_\infty 
\\&\leq 
2r\big(\norm{M\bxi}_\infty + 2\min\{r,\norm{M}_{\infty,0}\}  \norm{M}_\infty \big).
\end{align*}
We may then write that
\begin{align}
\label{eq:qf-x-infty}
\sup_{x\in\R}\PROB\left\{\res_x^M(\bxi) \leq r\right\}
\leq
\sup_{x\in\R} \PROBp{|\bxi ^\trans M \bxi - x | \leq 2r\big(\norm{M\bxi}_\infty + 2\min\{r,\norm{M}_{\infty,0}\}  \norm{M}_\infty )}.
\end{align}

Recalling that each member $X_{\bt}$ of the random process $\bX$, defined in~\eqref{eq::gauss-proc}, is sub-gaussian with parameter $c\norm{\bt}_2^2$ (for some constant $c$) and that $\sup_{\bt \in \mathrm{Rows}(M)} X_{\bt} = \norm{M\bxi}_\infty $, allows us to bound the latter using the following tail inequality. 

\begin{theorem}\label{thm::Dudley-tail}{\em (Dudley's maximal tail inequality - abridged~\cite[Lemma~5.2]{van2014probability})}\\
Let $T \subseteq \R^n$ be finite and let $(Y_{\bt})_{\bt \in T}$ be a random process such that $Y_{\bt}$ is sub-gaussian with parameter $K^2$ for every $\bt \in T$. Then, 
$$
\PROBp{\sup_{\bt \in T} Y_{\bt} \geq K\sqrt{2 \log |T|} + x} \leq \exp\left(-\frac{x^2}{2K^2}\right)
$$
holds for every $x \geq 0$. 
\end{theorem}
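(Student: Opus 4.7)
The plan is the textbook union-bound argument. I would first recall that a sub-gaussian random variable $Y$ with parameter $K^2$ satisfies the one-sided tail bound
$\PROB\{Y \geq t\} \leq \exp\bigl(-t^2/(2K^2)\bigr)$
for every $t \geq 0$; this follows from the Markov-Chernoff method applied to $\EXPp{e^{\lambda Y}} \leq e^{\lambda^2 K^2/2}$, which is the standard definition of sub-gaussianity used in~\cite[Definition~3.5]{van2014probability}. Optimising the resulting bound $e^{-\lambda t + \lambda^2 K^2/2}$ over $\lambda > 0$ at $\lambda = t/K^2$ yields the claimed tail.

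Next, I would apply the union bound over the finite set $T$ to obtain
\begin{equation*}
\PROBp{\sup_{\bt \in T} Y_{\bt} \geq t} \;\leq\; \sum_{\bt \in T}\PROBp{Y_{\bt} \geq t} \;\leq\; |T|\exp\!\left(-\frac{t^2}{2K^2}\right)
\end{equation*}
for every $t \geq 0$; note this requires no joint distributional assumptions on the process $(Y_{\bt})_{\bt \in T}$ beyond the marginal sub-gaussianity, which matches the hypothesis of the statement.

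The final step is the appropriate choice $t := K\sqrt{2\log|T|} + x$. Expanding the square gives
\begin{equation*}
t^2 \;=\; 2K^2\log|T| \;+\; 2Kx\sqrt{2\log|T|} \;+\; x^2 \;\geq\; 2K^2\log|T| + x^2,
\end{equation*}
where the cross term is non-negative thanks to $x \geq 0$ and $|T| \geq 1$. Dividing by $2K^2$ and exponentiating produces
$\exp(-t^2/(2K^2)) \leq |T|^{-1}\exp(-x^2/(2K^2))$, which absorbs the $|T|$ factor appearing in the union bound and delivers exactly the desired inequality. I expect no substantial obstacle: the content is really the sub-gaussian Chernoff tail plus a union bound, and the only subtlety is making sure the $+x$ shift interacts cleanly with the $K\sqrt{2\log|T|}$ offset, which is precisely what the non-negativity of the cross term $2Kx\sqrt{2\log|T|}$ guarantees.
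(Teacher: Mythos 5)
Your proof is correct and is precisely the standard argument behind the cited result (the paper itself only quotes \cite[Lemma~5.2]{van2014probability} without proof): sub-gaussian Chernoff tail, union bound over the finite index set $T$, and the choice $t = K\sqrt{2\log|T|}+x$ whose non-negative cross term absorbs the factor $|T|$. No gaps; nothing further is needed.
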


\noindent
Using Theorem~\ref{thm::Dudley-tail}, we may now write that for a sufficiently large constant $C$, 
\begin{align*}
    \PROBp{\norm{M\bxi}_{\infty} > \sqrt{C^2\log n\norm{M}_{\infty,2}^2}}
    \leq  \frac{c_4}{n}
\end{align*}
holds. Setting
\begin{align*}
\eps_{M}(r) = 2r\sqrt{C^2\log n}\norm{M}_{\infty,2}+ 4r\min\{r,\norm{M}_{\infty,0}\}\norm{M}_\infty,
\end{align*}
we may rewrite~\eqref{eq:qf-x-infty} as to read
\begin{align}
\label{eq:qf-x}
\sup_{x\in\R}\PROB\left\{\res_x^M(\bxi) \leq r\right\}
\leq
\sup_{x \in \R} \PROB\left\{|\bxi ^\trans M \bxi - x | \leq \eps_{M}(r)\right\}
+\frac{c_4}{n}
\end{align}

\bigskip
\noindent
{\bf Decoupling.} To bound the first term appearing on the right hand side of \eqref{eq:qf-x} we reduce the small-ball probability for a quadartic form to that of a bilinear form through a decoupling argument. Let $I_1,I_2\subseteq [n]$ be an arbitrary partition of $[n]$. 
Write $\bxi_{(1)}$ and $\bxi_{(2)}$ to denote the restriction of $\bxi$ to the indices in $I_1$ and $I_2$ respectively.
Without loss of generality, we write $\bxi=(\bxi_{(1)},\bxi_{(2)})$ and set
\begin{align*}
\qf(\bxi_{(1)},\bxi_{(2)}) 
:= \bxi^\trans M \bxi    
= (\bxi_{(1)},\bxi_{(2)})^\trans M (\bxi_{(1)},\bxi_{(2)}).
\end{align*}
For a fixed $x \in \R$
 and $\eps>0$, 
define the event
\begin{align*}
\event(\bxi_{(1)},\bxi_{(2)}) := \event_{x,\eps}(\bxi_{(1)},\bxi_{(2)}):= \left\{|\qf(\bxi_{(1)},\bxi_{(2)}) -x|\leq \eps\right\}.
\end{align*}
Let $\bxi'_{(1)}$ and $\bxi'_{(2)}$ be independent copies of $\bxi_{(1)}$ and $\bxi_{(2)}$. 
Then,
\begin{align*}
\PROB\left\{\event(\bxi_{(1)},\bxi_{(2)})\right\}
&\leq 
\PROB\left\{\event(\bxi_{(1)},\bxi_{(2)}) \wedge \event(\bxi'_{(1)},\bxi_{(2)})  \wedge \event(\bxi_{(1)},\bxi'_{(2)}) \wedge \event(\bxi'_{(1)},\bxi'_{(2)}) \right\}^{1/4}
\end{align*}
holds by~\cite[Lemma 4.7]{CTV06}. 
Note that 
\begin{align*}
\event(\bxi_{(1)},\bxi_{(2)})  \wedge \event(\bxi'_{(1)},\bxi_{(2)}) &=
\left\{|\qf(\bxi_{(1)},\bxi_{(2)}) -x|\leq \eps\right\}
\wedge 
\left\{|\qf(\bxi'_{(1)},\bxi_{(2)}) -x|\leq \eps\right\}
\end{align*}
implies the event
\begin{align*}
\left\{|\qf(\bxi_{(1)},\bxi_{(2)}) -\qf(\bxi'_{(1)},\bxi_{(2)}) |\leq 2\eps\right\},
\end{align*}
and similarly to the second pair of events $\event(\bxi_{(1)},\bxi'_{(2)})  \wedge \event(\bxi'_{(1)},\bxi'_{(2)})$.
So the following inclusion of events holds
\begin{align*}
\event(\bxi_{(1)},\bxi_{(2)}) & \wedge \event(\bxi'_{(1)},\bxi_{(2)}) 
\wedge \event(\bxi_{(1)},\bxi'_{(2)}) \wedge \event(\bxi'_{(1)},\bxi'_{(2)}) \\ 
&\subseteq 
\left\{|\qf(\bxi_{(1)},\bxi_{(2)}) -\qf(\bxi'_{(1)},\bxi_{(2)}) |\leq 2\eps\right\}
\,\wedge\, 
\left\{|\qf(\bxi_{(1)},\bxi'_{(2)}) -\qf(\bxi'_{(1)},\bxi'_{(2)}) |\leq 2\eps\right\}\\
&\subseteq
\left\{|\qf(\bxi_{(1)},\bxi_{(2)}) -\qf(\bxi'_{(1)},\bxi_{(2)}) - \qf(\bxi_{(1)},\bxi'_{(2)}) +\qf(\bxi'_{(1)},\bxi'_{(2)}) |\leq 4\eps\right\}
\\ &= \left\{|R(I_1,I_2)|\leq 4\eps\right\},
\end{align*}
where we defined
\begin{align*}
R(I_1,I_2) & :=
\qf(\bxi_{(1)},\bxi_{(2)}) -\qf(\bxi'_{(1)},\bxi_{(2)}) - \qf(\bxi_{(1)},\bxi'_{(2)}) +\qf(\bxi'_{(1)},\bxi'_{(2)}). 
\end{align*}
Following~\cite{CTV06}, a straight forward calculation establishes that
\begin{align*}
R(I_1,I_2) & =
2\sum_{i\in I_1}\sum_{j\in I_2} M_{ij}(\xi_i -\xi'_i)(\xi_j -\xi'_j).
\end{align*}
The random variables $z_i=\big((\xi_i -\xi'_i)/2\big)_{i \in [n]}$ are i.i.d. lazy Rademacher random variables. 
Write $\bm z_{(1)} :=(z_i)_{i\in I_1}$ and $\bm z_{(2)}:=(z_i)_{i\in I_2}$ and let
$M_{(1,2)}$ denote the $|I_1|\times|I_2|$ submatrix of $M$ with rows in $I_1$ and columns in $I_2$.
Then, 
\begin{align*}
R(I_1,I_2)  = \frac{1}{2}\bm z_{(1)}^\trans M_{(1,2)} \bm z_{(2)}
\end{align*}
is a decoupled bilinear form. 
Hence,
the first term appearing on the right hand side of \eqref{eq:qf-x} is bounded by
\begin{align*}
\PROBp{|\bxi ^\trans M \bxi - x | \leq \eps_{M}(r)}
&\leq 
\PROBp{|R(I_1,I_2)|\leq 4\eps_{M}(r)}^{{1}/{4}}
\\&=\PROBp{|\bm z_{(1)}^\trans M_{(1,2)} \bm z_{(2)}| 
\leq 2\eps_{M}(r)}^{{1}/{4}}.
\end{align*}

In what follows, we insist on a partition $(I_1,I_2)$ of $[n]$  satisfying
\begin{equation}\label{eq::partition}
\norm{M_{(1,2)}}_\frob^2 \geq \frac{1}{8}\norm{M}_{\frob}^2;
\end{equation}
such a partition always exists by the pigeonhole principle.
To see this, consider a random partition $I_1(\delta) = \{i\in [n]: \delta_i=1\}$ and $I_2(\delta)=[n]\setminus I_1(\delta)$ where $(\delta_i)_{i\in[n]}$ are i.i.d.\ balanced Bernoulli random variables i.e. $\PROB\{\delta_i=1\}=1/2=\PROB\{\delta_i=0\}$. Then,  
$\EXP_\delta\{\delta_i(1-\delta_j)\}=\chr{i\neq j}/4$.
which in turn allows us to write  
\begin{align*}
\EXP_\delta\left\{\norm{M_{(1,2)}}_\frob^2\right\}
&=
\EXP_\delta\left\{\sum_{i\in I_1(\delta)}\sum_{j\in I_2(\delta)} M_{ij}^2 \right\}
=
\frac{1}{2}\EXP_\delta\left\{\sum_{i\in [n]}\sum_{j\in [n]} \delta_i(1-\delta_j)M_{ij}^2\right\}
=
\frac{1}{8}\norm{M}_{\frob}^2.
\end{align*}
In addition,  since $\EXP\{z_iz_j\} = 4\chr{i=j}$,
\begin{align*}
\EXP\left\{\norm{M_{(1,2)}\bm z_{(2)}}_2^2\right\} 
 &=
\sum_{i\in I_1}\sum_{j\in I_2} \sum_{\ell\in I_2} M_{ij} M_{i\ell}\EXP\left\{z_j z_\ell\right\} 
=
4\sum_{i\in I_1}\sum_{j\in I_2} M_{ij}^2
=
4\norm{M_{(1,2)}}_{\frob}^2
\geq \frac{1}{2}\norm{M}_{\frob}^2
\end{align*}
and the existence of a partition $(I_1,I_2)$ of $[n]$ satisfying~\eqref{eq::partition} is established.

Lemma \ref{lem:small_ball_bilinear} then asserts that
\begin{align*}
\PROB\left\{|\bm z_{(1)}^\trans M_{(1,2)} \bm z_{(2)}| \leq 2\eps_{M}(r)\right\} &\leq
\frac{2\eps_{M}(r)}{\norm{M_{(1,2)}}_\frob} + \exp{\left(-C \sr(M_{(1,2})\right)}
\\&\leq
\frac{4\eps_{M}(r)}{\norm{M}_\frob} + \exp{\left(-8C\sr(M)\right)},
\end{align*}
where in the last inequality we rely on $\norm{M_{(1,2)}}_\frob^2\geq\norm{M}_\frob^2 /8$ and the fact that
$\norm{M_{(1,2)}}_2 \leq \norm{M}_2$.
To see the latter property, note that
\begin{align*}
\norm{M_{(1,2)}}_2^2   & =
\sup_{\bm v: \|\bm v\|_2=1} \norm{M_{(1,2)}\bm v}_2^2
\\&\leq
\sup_{\bm v: \|\bm v\|_2=1}
 \left\|\begin{pmatrix}
      0 & M_{(1,2)}
      \\ 0  & M_{(2,2)}
 \end{pmatrix}
\begin{pmatrix}
\bm 0 \\ \bm  v
\end{pmatrix}
\right\|_2^2
 \leq
\sup_{\bm v: \|\bm v\|_2=1}
 \left\|M \bm v
\right\|_2^2
= 
\left\|M\right\|_2^2,
\end{align*}
where the first inequality holds since by adding the value $\norm{M_{(2,2)}\bm v}_2^2$ we only increase the 2-norm. The second inequality holds since we allow $\bm v$ to span a larger space.

Putting everything together yields, 
\begin{align*}
\sup_{x\in\R}& \PROB\left\{\res_x^M(\bxi) \leq r\right\} \\
& \leq \left(\frac{4\eps_{M}(r)}{\norm{M}_\frob} + \exp{\left(-8C\sr(M)\right)}\right)^{1/4}
+ \frac{c_4}{n}
\\
&\leq \left(\frac{c_1 k\sqrt{\log n}\norm{M}_{\infty,2}}{\norm{M}_\frob}
+
\frac{c_
2k\min\{k,\norm{M}_{\infty,0}\}\norm{M}_\infty}{\norm{M}_\frob}
+
\exp{\left(-c_3\sr(M)\right)}
\right)^{1/4}
+ \frac{c_4}{n}.
\end{align*}

\end{document}